\newcommand{\eps}{\varepsilon}
\newcommand{\Fcal}{\mathcal{F}}
\newcommand{\Ical}{\mathcal{I}}
\newcommand{\Lcal}{\mathcal{L}}
\newcommand{\Pcal}{\mathcal{P}}
\newcommand{\Scal}{\mathcal{S}}
\newcommand{\Tcal}{\mathcal{T}}
\newcommand{\Xcal}{\mathcal{X}}
\newcommand{\bbA}{\mathbb{A}}
\newcommand{\bbB}{\mathbb{B}}
\newcommand{\R}{\mathbb{R}}
\newcommand{\bbL}{\mathbb{L}}
\newcommand{\N}{\mathbb{N}}
\newcommand{\bL}{\mathbf{L}}
\newcommand{\bH}{\mathbf{H}}
\newcommand{\bP}{\mathbf{P}}
\newcommand{\bV}{\mathbf{V}}
\newcommand{\bX}{\mathbf{X}}
\newcommand{\bY}{\mathbf{Y}}
\newcommand{\avec}{\mathbf{a}}
\newcommand{\bvec}{\mathbf{b}}
\newcommand{\dvec}{\mathbf{d}}
\newcommand{\evec}{\mathbf{e}}
\newcommand{\fvec}{\mathbf{f}}
\newcommand{\nvec}{\mathbf{n}}
\newcommand{\uvec}{\mathbf{u}}
\newcommand{\vvec}{\mathbf{v}}
\newcommand{\wvec}{\mathbf{w}}
\newcommand{\xvec}{\mathbf{x}}
\newcommand{\hatF}{\hat{F}}
\newcommand{\hatK}{\hat{K}}
\newcommand{\Om}{\Omega}
\newcommand{\ds}{\displaystyle}
\newcommand{\pa}{\partial}
\newcommand{\ul}{\underline}
\newcommand\norm[1]{\left\| #1 \right\|}
\DeclareMathOperator{\grad}{\mathbf{grad}}
\DeclareMathOperator{\Grad}{\mathbf{Grad}}
\DeclareMathOperator{\Deltavec}{\mathbf{\Delta}}
\let\div=\divergence
\theoremstyle{plain}
\newtheorem{theo}{Theorem}
\newtheorem{defi}{Definition}
\newtheorem{lemm}{Lemma}
\newtheorem{prop}{Proposition}
\newtheorem{rema}{Remark}
\numberwithin{equation}{section}
\begin{document}
\title[T-coercivity for Stokes]{T-coercivity for solving Stokes problem with nonconforming finite elements}
\author{Erell Jamelot}
\address{Universit\'e Paris-Saclay, CEA, Service de Thermo-hydraulique et de M\'ecanique des
Fluides, 91191 Gif-sur-Yvette, France}
\email{erell.jamelot@cea.fr}
\date{\today}
\begin{abstract}
We propose to analyse the discretization of the Stokes problem with nonconforming finite elements in light of the T-coercivity (cf. \cite{Ciar12} for Helmholtz-like problems, see \cite{JaCi13}, \cite{CiJK17} and \cite{Gire18} for the neutron diffusion equation). We propose explicit expressions of the stability constants. Finally, we give numerical results illustrating the importance of using divergence-free velocity reconstruction.
\\
\noindent \textbf{Keywords.} Stokes problem, T-coercitivity, nonconforming finite elements, Fortin operator\\
\noindent \textbf{2020 Mathematics Subject Classification.} 65N30, 35J57, 76D07\\
\noindent \textbf{Funding.} CEA SIMU/SITHY project
\end{abstract}
\maketitle
\section{Introduction}
The Stokes problem describes the steady state of incompressible Newtonian flows. They are derived from the Navier–Stokes equations \cite{GiRa86}. With regard to numerical analysis, the study of Stokes problem helps to build an appropriate approximation of the Navier–Stokes equations. We consider here a discretization with nonconforming finite elements \cite{CrRa73,FoSo83}. We propose to state the discrete inf-sup condition in light of the T-coercivity (cf. \cite{Ciar12} for Helmholtz-like problems, see \cite{JaCi13}, \cite{CiJK17} and \cite{Gire18} for the neutron diffusion equation), which allows to estimate the discrete error constant. In Section \ref{sec:Tcoer}, we recall the T-coercivity theory as written in \cite{Ciar12}. In Section \ref{sec:Stokes} we apply it to the continuous Stokes Problem. We give details on the triangulation in Section \ref{sec:disc}, and we apply the T-coercivity to the discretization of Stokes problem with nonconforming mixed finite elements in Section \ref{sec:Tcoerh}. In Section \ref{sec:CrRa} (resp. \ref{sec:FoSo}), we precise the proof of the well-posedness in the case of order $1$ (resp. order $2$) nonconforming mixed finite elements. Finally, we give numerical results illustrating the importance of using divergence-free velocity reconstruction.
\section{T-coercivity}\label{sec:Tcoer}
We recall here the T-coercivity theory as written in \cite{Ciar12}. Consider first the variational problem, where $V$ and $W$ are two Hilbert spaces and $f\in V'$:
\begin{equation}
\label{eq:var-pb}
\mbox{Find }u\in V\mbox{ such that }\forall v\in W\mbox{, }a(u,v)=\langle f,v\rangle_V.
\end{equation}
Classically, we know that Problem \eqref{eq:var-pb} is well-posed if $a(\cdot,\cdot)$ satisfies  the stability and the solvability conditions of the so-called Banach–Ne\v{c}as–Babu\v{s}ka (BNB) Theorem (see a.e. \cite[Thm. 25.9]{ErGu21-II}). For some models, one can also prove the well-posedness using the T-coercivity theory (cf. \cite{Ciar12} for Helmholtz-like problems, see \cite{JaCi13}, \cite{CiJK17} and \cite{Gire18} for the neutron diffusion equation).
\begin{defi}\label{def:Tcoer}
Let $V$ and $W$ be two Hilbert spaces and $a(\cdot,\cdot)$ be a continuous and bilinear form over $V\times W$. It is $T$-coercive if
\begin{equation}\label{eq:Tcoer}
\exists T\in\Lcal(V,W)\mbox{, bijective, }\exists\alpha>0\mbox{, }\forall v\in V\mbox{, }|a(v,Tv)|\geq\alpha\|v\|_V^2.
\end{equation}
If in addition $a(\cdot,\cdot)$ is symmetric, it is $T$-coercive if
\begin{equation}\label{eq:Tcoer-sym}
\exists T\in\Lcal(V,V)\mbox{, }\exists\alpha>0\mbox{, }\forall v\in V\mbox{, }|a(v,Tv)|\geq \alpha\|v\|_V^2.
\end{equation}
\end{defi}
\textcolor{blue}{When the bilinear form $a(\cdot,\cdot)$ is symmetric, the requirement that the operator $T$ is bijective can be dropped.} It is proved in \cite{Ciar12} that the T-coercivity  condition is equivalent to the stability and solvability conditions of the BNB Theorem. Whereas the BNB theorem relies on an abstract inf–sup condition, T-coercivity uses explicit inf–sup operators, both at the continuous and discrete levels. 
\begin{theo}{(well-posedness)}\label{thm:WellPosed}
Let $a(\cdot,\cdot)$ be a continuous and bilinear form. Suppose that the form $a(\cdot,\cdot)$ is $T$-coercive. Then Problem \eqref{eq:var-pb} is well-posed.
\end{theo}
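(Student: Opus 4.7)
The plan is to reduce T-coercivity to the two conditions of the Banach–Ne\v{c}as–Babu\v{s}ka theorem, namely the inf--sup condition on $a(\cdot,\cdot)$ and the injectivity of its transpose, and then invoke BNB directly. Since the excerpt already records that T-coercivity is equivalent to BNB, I only need to make the (easier) implication explicit.

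First I would derive the inf--sup inequality. Starting from \eqref{eq:Tcoer} and using the continuity of $T\in\Lcal(V,W)$, for any $v\in V\setminus\{0\}$ I would bound
\begin{equation*}
\sup_{w\in W\setminus\{0\}}\frac{|a(v,w)|}{\|w\|_W}\;\geq\;\frac{|a(v,Tv)|}{\|Tv\|_W}\;\geq\;\frac{\alpha\,\|v\|_V^2}{\|T\|_{\Lcal(V,W)}\,\|v\|_V}\;=\;\frac{\alpha}{\|T\|_{\Lcal(V,W)}}\,\|v\|_V,
\end{equation*}
which yields the BNB stability condition with explicit constant $\alpha/\|T\|_{\Lcal(V,W)}$.

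Next I would verify the solvability (non-degeneracy) condition: if $w\in W$ satisfies $a(v,w)=0$ for every $v\in V$, then $w=0$. Here the bijectivity of $T$ is essential. Given such a $w$, I set $v:=T^{-1}w\in V$; then $Tv=w$ and \eqref{eq:Tcoer} gives $\alpha\|v\|_V^2\leq|a(v,Tv)|=|a(v,w)|=0$, whence $v=0$ and therefore $w=Tv=0$. In the symmetric case covered by \eqref{eq:Tcoer-sym}, this step can be reorganized so as not to require bijectivity of $T$, as noted in blue above.

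Finally I would invoke the BNB theorem (for instance \cite[Thm.~25.9]{ErGu21-II}) to conclude that Problem \eqref{eq:var-pb} admits a unique solution $u\in V$, with the a priori estimate $\|u\|_V\leq (\|T\|_{\Lcal(V,W)}/\alpha)\,\|f\|_{V'}$. None of the steps is really an obstacle; the only point deserving care is the handling of the solvability condition, where one must use $T^{-1}$ rather than $T$, so the hypothesis that $T$ is bijective (and not merely injective or surjective) is used in an essential way.
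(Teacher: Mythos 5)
Your proof is correct and takes essentially the same route the paper intends: the paper does not spell out an argument but simply appeals to the equivalence, established in \cite{Ciar12}, between T-coercivity and the stability and solvability conditions of the BNB theorem, which is precisely the implication you make explicit (inf--sup with constant $\alpha/\|T\|_{\Lcal(V,W)}$ via continuity of $T$, non-degeneracy of the transpose via $T^{-1}$). Your closing remark on where bijectivity of $T$ is genuinely needed, and why it can be dropped in the symmetric case, matches the paper's own annotations.
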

\section{Stokes problem}\label{sec:Stokes}
Let $\Om$ be a connected bounded domain of $\R^d$, $d=2,\,3$, with a polygonal $(d=2)$ or Lipschitz polyhedral $(d=3)$ boundary $\pa\Om$. We consider Stokes problem:
\begin{equation}\label{eq:Stokes}
\mbox{Find }(\uvec,p)\mbox{ such that }
\left\{\begin{array}{rcl}
-\nu\Deltavec\uvec+\grad p &=&\fvec,\\
\div\uvec&=&0.
\end{array}\right.
\end{equation}
with Dirichlet boundary conditions for the velocity $\uvec$ and a normalization condition for the pressure $p$:
\[
\uvec=0\mbox{ on }\pa\Om,\quad\int_\Om p=0.
\]
The vector field $\uvec$ represents the velocity of the fluid and the scalar field $p$ represents its pressure divided by the fluid density which is supposed to be constant. Thus, the SI unit of the components of $\uvec$ is $m\cdot s^{-1}$ and the SI unit of $p$ is $m^2\cdot s^{-2}$). The first equation of \eqref{eq:Stokes} corresponds to the momentum balance equation and the second one corresponds to the conservation of the mass. The constant parameter $\nu>0$ is the kinematic viscosity of the fluid, its SI unit is $m^2\cdot s^{-1}$. The vector field $\fvec\in\bH^{-1}(\Om)$ represents a body forces divided by the fluid density, its SI unit is $m\cdot s^{-2}$.

Before stating the variational formulation of Problem \eqref{eq:Stokes}, we provide some definition and reminders. Let us set $\bL^2(\Om)=(L^2(\Om))^d$, $\bH^1_0(\Om)=(H^1_0(\Om))^d$, $\bH^{-1}(\Omega)=(H^{-1}(\Om))^d$ its dual space and $L^2_{zmv}(\Om)=\{q\in L^2(\Om)\,|\,\int_{\Om}q=0\}$. We recall that $\bH(\div;\,\Om)=\{\vvec\in\bL^2(\Om)\,|\,\div\vvec\in\ L^2(\Om)\}$. Let us first recall Poincaré-Steklov inequality:
\begin{equation}\label{eq:Poincare}
\exists C_{PS}>0\,|\,\forall v\in H^1_0(\Om),\quad \|v\|_{L^2(\Om)}\leq C_{PS}\|\grad v\|_{\bL^2(\Om)}.
\end{equation}
The SI unit of $C_{PS}$ is $m$. \\
Thanks to this result, in $H^1_0(\Om)$, the semi-norm is equivalent to the natural norm, so that the scalar product reads $(v,w)_{H^1_0(\Om)}=(\grad v,\grad w)_{\bL^2(\Om)}$ and the norm is $\|v\|_{H^1_0(\Om)}=\|\grad v\|_{\bL^2(\Om)}$. Let $\vvec,\,\wvec\in\bH^1_0(\Om)$. We denote by $(v_i)_{i=1}^d$ (resp. $(w_i)_{i=1}^d$) the components of $\vvec$ (resp. $\wvec$), and we set $\Grad\vvec=(\pa_j v_i)_{i,j=1}^d\in\bbL^2(\Om)$, where $\bbL^2(\Om)=[L^2(\Om)]^{d\times d}$. We have: 
\[
(\Grad\vvec,\Grad\wvec)_{\bbL^2(\Om)}=(\vvec,\wvec)_{\bH^1_0(\Om)}=\ds\sum_{i=1}^d(v_i,w_i)_{H^1_0(\Om)}\]
and:
\[
\|\vvec\|_{\bH^1_0(\Om)}=\ds\left(\sum_{j=1}^d\|v_j\|_{H^1_0(\Om)}^2\right)^{1/2}=\|\Grad\vvec\|_{\bbL^2(\Om)}.
\]
Let us set $\bV=\left\{\vvec\in\bH^1_0(\Om)\,|\,\div\vvec=0\right\}$. The space $\bV$ is a closed subset of $\bH^1_0(\Om)$. We denote by $\bV^\perp$ the orthogonal of $\bV$ in $\bH^1_0(\Om)$. Let $\nu_p>0$ be a kinematic viscosity. We recall that \cite[cor. I.2.4]{GiRa86}:
\begin{prop}\label{prop:diviso}
The operator $\div:\,\bH^1_0(\Om)\rightarrow L^2(\Om)$ is an isomorphism of $\,\bV^{\perp}$ onto $L^2_{zmv}(\Om)$. Let $\nu_p>0$ be a constant kinematic viscosity.
We call $C_{\div}$ the constant such that:
\begin{equation}\label{eq:BAI}
\forall p\in L^2_{zmv}(\Om),\,\exists!\vvec_p\in\bV^{\perp}\,|\,\div\vvec_p=\frac{1}{\nu_p}p\mbox{ and }
\|\vvec_p\|_{\bH^1_0(\Om)}\leq \frac{C_{\div}}{\nu_p}\|p\|_{L^2(\Om)}.
\end{equation}
\end{prop}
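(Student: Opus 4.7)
The plan is to view $\div$ as a continuous linear operator from $\bH^1_0(\Om)$ to $L^2(\Om)$, whose kernel is exactly $\bV$ by definition. Factoring through the orthogonal decomposition $\bH^1_0(\Om)=\bV\oplus\bV^\perp$, the restriction $\widetilde{\div}:\bV^\perp\to L^2(\Om)$ is injective and continuous; the whole game is then to identify its range with $L^2_{zmv}(\Om)$ and to invoke the open mapping theorem to extract the uniform bound.

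The inclusion $\mathrm{range}(\widetilde{\div})\subseteq L^2_{zmv}(\Om)$ is immediate: by density of $\Ccal^\infty_c(\Om)^d$ in $\bH^1_0(\Om)$ and the divergence theorem, $\int_\Om\div\vvec=0$ for every $\vvec\in\bH^1_0(\Om)$. The substantive step is the reverse inclusion, i.e.\ the surjectivity of $\div$ onto $L^2_{zmv}(\Om)$. The standard route goes through Ne\v{c}as's negative-norm lemma: there exists $C_N>0$, depending only on $\Om$, such that every $q\in L^2_{zmv}(\Om)$ satisfies $\|q\|_{L^2(\Om)}\leq C_N\|\grad q\|_{\bH^{-1}(\Om)}$. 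Since $-\grad$ is the Hilbert-space adjoint of $\div:\bH^1_0(\Om)\to L^2(\Om)$ (integration by parts against test fields), Banach's closed range theorem turns this negative-norm estimate into the surjectivity of $\div$ onto the annihilator of $\ker(\grad)=\R$, namely $L^2_{zmv}(\Om)$, together with an a priori estimate on the preimage. An alternative is Bogovskii's explicit continuous right inverse on star-shaped pieces combined with a covering argument, which is the approach followed in \cite{GiRa86}.

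Once surjectivity is in hand, $\widetilde{\div}:\bV^\perp\to L^2_{zmv}(\Om)$ is a continuous bijection between Hilbert spaces, and the open mapping theorem supplies a constant $\widetilde{C}_{\div}>0$ such that the unique $\wvec_q\in\bV^\perp$ solving $\div\wvec_q=q$ obeys $\|\wvec_q\|_{\bH^1_0(\Om)}\leq\widetilde{C}_{\div}\|q\|_{L^2(\Om)}$. To obtain the scaled estimate \eqref{eq:BAI}, it suffices to apply this to $q=p/\nu_p\in L^2_{zmv}(\Om)$ and to set $\vvec_p:=\wvec_{p/\nu_p}$ and $C_{\div}:=\widetilde{C}_{\div}$; linearity of $\div$ gives $\div\vvec_p=p/\nu_p$ and the bound becomes $\|\vvec_p\|_{\bH^1_0(\Om)}\leq(C_{\div}/\nu_p)\|p\|_{L^2(\Om)}$.

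The main obstacle is precisely the surjectivity step, i.e.\ Ne\v{c}as's inequality, which is the deep ingredient and is where the regularity hypothesis on $\pa\Om$ (polygonal in 2D, Lipschitz polyhedral in 3D) is essential; everything else is routine Hilbert space machinery. It is worth noting that $C_{\div}$ is exactly the reciprocal of the continuous inf-sup constant for the pair $(\bH^1_0(\Om),L^2_{zmv}(\Om))$, which will play the role of the stability constant in the T-coercivity analysis of the Stokes problem to follow.
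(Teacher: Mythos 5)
Your proof is correct and follows the standard route; the paper itself offers no proof of this proposition, simply recalling it from \cite[Cor.~I.2.4]{GiRa86}, and the argument in that reference is precisely the chain you describe (kernel $=\bV$, identification of the range with $L^2_{zmv}(\Om)$ via the Ne\v{c}as negative-norm inequality and the closed range theorem, then the open mapping theorem for the uniform bound, with the $\nu_p$-scaling being a triviality). The only quibble is attributional: Girault--Raviart obtain surjectivity through the Ne\v{c}as inequality rather than through Bogovskii's explicit right inverse, so your parenthetical crediting the Bogovskii route to that book is slightly off, but this has no bearing on the validity of the argument.
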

Here, the constant $C_{\div}$ has no unit. It depends only on the domain $\Om$. Notice that we have: $C_{\div}=1/\beta(\Om)$ where $\beta(\Om)$ is the inf-sup condition (or Ladyzhenskaya–Babu\v{s}ka–Brezzi condition):
\begin{equation}\label{eq:LBB-CIS}
\beta(\Om)=\inf_{q\in L^2_{zmv}(\Om)\backslash \{0\}}\sup_{\vvec\in\bH^1_0(\Om)\backslash \{0\}}\frac{(q,\div\vvec)_{L^2(\Om)}}{\|q\|_{L^2(\Om)}\,\|\vvec\|_{\bH^1_0(\Om)}}.
\end{equation}
Generally, the value of $\beta(\Om)$ is not known explicitly. In \cite{BCDG16}, Bernardi et al established results on the discrete approximation of $\beta(\Om)$ using conforming finite elements. Recently, Gallistl proposed in \cite{Gall19} a numerical scheme with adaptive meshes for computing approximations to $\beta(\Om)$. In the case of $d=2$, Costabel and Dauge \cite{CoDa15} established the following bound: 
\begin{theo}
Let $\Om\subset\R^2$ be a domain contained in a ball of radius $R$, star-shaped with respect to a concentric ball of radius $\rho$. Then
\begin{equation}\label{eq:betaOm}
\beta(\Om)\geq\frac{\rho}{\sqrt{2\,R}}\left(1+\sqrt{1-\frac{\rho^2}{R^2}}\right)^{-1/2}\geq\frac{\rho}{2\,R}.
\end{equation}
\end{theo}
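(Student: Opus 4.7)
The plan is to derive the bound by combining the Horgan--Payne geometric inequality with an explicit Bogovskii-type divergence lifting, following the approach of Costabel and Dauge \cite{CoDa15}. By Proposition~\ref{prop:diviso}, the task is equivalent to exhibiting, for every $q\in L^2_{zmv}(\Om)$, a velocity $\vvec\in\bH^1_0(\Om)$ with $\div\vvec=q$ and $\|\vvec\|_{\bH^1_0(\Om)}$ controlled by the reciprocal of the announced lower bound times $\|q\|_{L^2(\Om)}$. This dual formulation is the right viewpoint, because the Bogovskii/Horgan--Payne constructions produce the lifting explicitly from the geometry.

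First I would place the origin at the center $x_0$ of the inscribed disk $B(x_0,\rho)$ and describe $\pa\Om$ in polar coordinates $r=r(\vartheta)$, $\vartheta\in[0,2\pi)$. Star-shapedness with respect to $B(x_0,\rho)$ translates to the pointwise bound $|r'(\vartheta)|/r(\vartheta)\leq\sqrt{r(\vartheta)^2/\rho^2-1}$, which says that each radial ray from $x_0$ meets $\pa\Om$ transversally with an angle uniformly bounded away from zero. Then I would invoke the Horgan--Payne inequality for planar domains star-shaped with respect to a point, namely
\[
\beta(\Om)\geq\sin(\omega^*/2),
\]
where the Horgan--Payne angle $\omega^*\in(0,\pi)$ is the infimum over boundary points of the angle between the radial ray from $x_0$ and the tangent to the inscribed disk seen from that point. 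The proof of this inequality is the technical heart of the argument: it relies on a tangential-plus-radial ansatz for the divergence lifting together with Schwarz-type inequalities along each ray, exactly as in \cite{CoDa15}.

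The last step is to reduce the geometry to the two parameters $\rho$ and $R$. For any boundary point $P$, its distance to $x_0$ is at most $R$, so the half-angle under which $B(x_0,\rho)$ is seen from $P$ is bounded below by $\arcsin(\rho/R)$; equivalently $\cos\omega^*\leq\sqrt{1-\rho^2/R^2}$. The half-angle identity together with the factorization $(1-\sqrt{1-\rho^2/R^2})(1+\sqrt{1-\rho^2/R^2})=\rho^2/R^2$ then yields
\[
\sin(\omega^*/2)=\sqrt{\frac{1-\cos\omega^*}{2}}\geq\frac{\rho}{\sqrt{2\,R}}\bigl(1+\sqrt{1-\rho^2/R^2}\bigr)^{-1/2},
\]
which is the first inequality in \eqref{eq:betaOm}. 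The second inequality is immediate from $1+\sqrt{1-\rho^2/R^2}\leq 2$.

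The main obstacle is the Horgan--Payne step with its \emph{sharp} constant $\sin(\omega^*/2)$: this is not elementary and is essentially the content of \cite{CoDa15}. By contrast, the translation of star-shapedness into polar coordinates is routine, and the final reduction to $(\rho,R)$ is a purely trigonometric computation. I would therefore present Step~2 as a citation to \cite{CoDa15} and devote the written proof to the explicit geometric estimate of Step~3.
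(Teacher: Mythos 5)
The paper offers no proof of this theorem: it is quoted verbatim from Costabel and Dauge \cite{CoDa15}, so there is no internal argument to measure yours against, and deferring the analytic core to that reference is entirely consistent with what the paper does. Your Step 3 is correct as a computation: with $\omega=\arcsin(\rho/R)$ one has $\cos\omega=\sqrt{1-\rho^2/R^2}$, the factorization $(1-\cos\omega)(1+\cos\omega)=\rho^2/R^2$ gives $\sin(\omega/2)=\sqrt{(1-\cos\omega)/2}=\rho\,(2R^2)^{-1/2}\bigl(1+\sqrt{1-\rho^2/R^2}\bigr)^{-1/2}$, and $1+\sqrt{1-\rho^2/R^2}\leq 2$ yields the final inequality in \eqref{eq:betaOm}. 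The polar-coordinate characterization of star-shapedness is also right.

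The genuine problem is Step 2, on two counts. First, the attribution is inverted: the inequality $\beta(\Om)\geq\sin(\omega_{HP}/2)$ with the true Horgan--Payne angle (the pointwise angle between the radial ray and the tangent to the \emph{boundary}, minimized over $\pa\Om$) is precisely what \cite{CoDa15} \emph{disprove} by counterexample; what they prove is the weaker bound with the single global angle $\omega=\arcsin(\rho/R)$. So the ``sharp constant $\sin(\omega^*/2)$'' you propose to cite is not available from that reference, and the ``tangential-plus-radial ansatz along each ray'' you describe is the flawed Horgan--Payne route, not the corrected argument (which goes through the identity between the Babu\v{s}ka--Aziz and Friedrichs constants for conjugate harmonic functions and a weighted estimate exploiting star-shapedness). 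Second, with the definition you actually give --- $\omega^*$ as the infimum over boundary points $P$ of the angle between the ray $x_0P$ and the tangent lines from $P$ to the inscribed disk --- one has $\omega^*=\arcsin\bigl(\rho/\max_{P\in\pa\Om}|P-x_0|\bigr)\geq\arcsin(\rho/R)$, so your Step 2 ``lemma'' is the theorem itself rewritten in angular form, and Step 3 is a change of notation rather than a reduction. As it stands the proposal therefore contains no proof content beyond the citation; to make it self-contained you would need to reproduce the actual construction of \cite{CoDa15} rather than the Horgan--Payne heuristic it corrects.
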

Let us detail the bound for some remarkable domains. If $\Om$ is a ball, $\beta(\Om)\geq\frac{1}{2}$ and if $\Om$ is a square, $\beta(\Om)\geq\frac{1}{2\,\sqrt{2}}$. Suppose now that $\Om$ is stretched in some direction by a factor $k$, then $\beta(\Om)\geq\frac{1}{2\,k}$. Finally, if $\Om$ is L-shaped (resp. cross-shaped) such that $L=k\,l$, where $L$ is the largest length and $l$ is the smallest length of an edge, then $\beta(\Om)\geq\frac{1}{2\,\sqrt{2}\,k}$ (resp. $\beta(\Om)\geq\frac{1}{4\,k}$).

The variational formulation of Problem \eqref{eq:Stokes} reads:\\
Find $(\uvec,p)\in\bH^1_0(\Om)\times L^2_{zmv}(\Om)$ such that
\begin{equation}\label{eq:Stokes-FV}
\left\{\begin{array}{rcll}
\nu(\uvec,\vvec)_{\bH^1_0(\Om)}-(p,\div\vvec)_{L^2(\Om)}&=&\langle\fvec,\vvec\rangle_{\bH^1_0(\Om)}&\forall\vvec\in\bH^1_0(\Om)~;\\
(q,\div\uvec)_{L^2(\Om)}&=&0&\forall q\in L^2_{zmv}(\Om).
\end{array}\right.
\end{equation}
Classically, one proves that Problem \eqref{eq:Stokes-FV} is well-posed using Poincaré-Steklov inequality \eqref{eq:Poincare} and Prop. \ref{prop:diviso}. Check for instance the proof of \cite[Thm. I.5.1]{GiRa86}.

Let us set $\Xcal=\bH^1_0(\Om)\times L^2_{zmv}(\Om)$ which is a Hilbert space which we endow with the following norm:
\begin{equation}\label{eq:normX}
\|(\vvec,q)\|_{\Xcal}=\|\vvec\|_{\bH^1_0(\Om)}+\nu^{-1}\,\|q\|_{L^2(\Om)}.
\end{equation}
We consider now the following bilinear symmetric and continuous form:
\begin{equation}\label{eq:aform}
\left\{
\begin{array}{rcl}
a_S:\Xcal\times\Xcal&\rightarrow&\R\\
(\uvec',p')\times(\vvec,q)&\mapsto&\nu(\uvec',\vvec)_{\bH^1_0(\Om)}-(p',\div\vvec)_{L^2(\Om)}-(q,\div\uvec')_{L^2(\Om)}
\end{array}\right..
\end{equation}
We can write Problem \eqref{eq:Stokes} in an equivalent way as follows:
\begin{equation}\label{eq:Stokes-FVa}
\mbox{Find }(\uvec,p)\in\Xcal\mbox{ such that}\quad a_S\,\left((\uvec,p),(\vvec,q)\right)=\langle\fvec,\vvec\rangle_{\bH^1_0(\Om)}\quad\forall(\vvec,q)\in\Xcal.
\end{equation}

Let us prove that Problem \eqref{eq:Stokes-FVa} is well-posed using the T-coercivity theory.
\begin{theo}
\label{thm:StokesWellPosed}
Problem \eqref{eq:Stokes-FVa} is well-posed. It admits one and only one solution such that:
\begin{equation}\label{eq:WellPosed}
\forall\fvec\in\bH^{-1}(\Omega),\quad\left\{\begin{array}{rcl}
\|\uvec\|_{\bH^1_0(\Om)}\leq\nu^{-1}\,\|\fvec\|_{\bH^{-1}(\Omega)},\\
\|p\|_{L^2(\Om)}\leq C_{\div}\,\|\fvec\|_{\bH^{-1}(\Omega)}.\\
\end{array}\right.
\end{equation}
\end{theo}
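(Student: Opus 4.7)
The plan is to apply Theorem~\ref{thm:WellPosed} by constructing an explicit operator $T \in \Lcal(\Xcal, \Xcal)$ realizing the T-coercivity of $a_S$, and then to derive the sharp bounds \eqref{eq:WellPosed} through well-chosen direct tests in \eqref{eq:Stokes-FVa}. Since $a_S$ is symmetric, we may use the variant \eqref{eq:Tcoer-sym} of Definition~\ref{def:Tcoer}, which does not require $T$ to be bijective.

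For the T-coercivity step, given $(\vvec, q) \in \Xcal$, I would invoke Proposition~\ref{prop:diviso} with $\nu_p = \nu$ to obtain $\vvec_q \in \bV^\perp$ satisfying $\div \vvec_q = q/\nu$ together with $\|\vvec_q\|_{\bH^1_0(\Om)} \leq (C_{\div}/\nu)\|q\|_{L^2(\Om)}$, and then set
\[
T(\vvec, q) := (\vvec - \eta\,\vvec_q,\, -q)
\]
for a parameter $\eta > 0$ to be fixed. A direct expansion, using the cancellation of the two cross terms $\pm(q,\div\vvec)_{L^2(\Om)}$ produced by the sign flip in the pressure component of $T$, gives
\[
a_S\bigl((\vvec, q), T(\vvec, q)\bigr) = \nu\|\vvec\|_{\bH^1_0(\Om)}^2 - \eta\nu\,(\vvec, \vvec_q)_{\bH^1_0(\Om)} + \frac{\eta}{\nu}\|q\|_{L^2(\Om)}^2.
\]
Estimating the middle term by Cauchy--Schwarz together with \eqref{eq:BAI}, then applying Young's inequality and optimizing over $\eta$ (the natural choice is $\eta = 1/C_{\div}^2$), yields a lower bound of the form $\tfrac{\nu}{2}\|\vvec\|_{\bH^1_0(\Om)}^2 + \tfrac{1}{2\nu C_{\div}^2}\|q\|_{L^2(\Om)}^2$. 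Combining this with the equivalence $\|(\vvec,q)\|_\Xcal^2 \leq 2\bigl(\|\vvec\|_{\bH^1_0(\Om)}^2 + \nu^{-2}\|q\|_{L^2(\Om)}^2\bigr)$ gives $a_S((\vvec,q), T(\vvec,q)) \geq \alpha\|(\vvec,q)\|_\Xcal^2$ with an explicit $\alpha > 0$, whence well-posedness via Theorem~\ref{thm:WellPosed}.

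For the explicit bounds \eqref{eq:WellPosed}, rather than tracking $\|T\|/\alpha$ through the abstract argument, I would test \eqref{eq:Stokes-FVa} directly with two well-chosen pairs. Taking $(\vvec,q)=(\uvec,p)$ gives $\nu\|\uvec\|_{\bH^1_0(\Om)}^2 - 2(p,\div\uvec)_{L^2(\Om)} = \langle\fvec,\uvec\rangle_{\bH^1_0(\Om)}$; the cross term vanishes because $\uvec \in \bV$ by the second equation of \eqref{eq:Stokes-FV}, which immediately yields the first bound. For the pressure, apply Proposition~\ref{prop:diviso} to $p$ itself, producing $\vvec_p \in \bV^\perp$ with $\div\vvec_p = p/\nu$ and $\|\vvec_p\|_{\bH^1_0(\Om)} \leq (C_{\div}/\nu)\|p\|_{L^2(\Om)}$, and test with $(\vvec_p,0)$. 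The decisive observation is that $\uvec \in \bV$ and $\vvec_p \in \bV^\perp$ in $\bH^1_0(\Om)$, so $(\uvec,\vvec_p)_{\bH^1_0(\Om)} = 0$ and the velocity contribution disappears, leaving $-\|p\|_{L^2(\Om)}^2/\nu = \langle\fvec,\vvec_p\rangle_{\bH^1_0(\Om)}$; bounding the right-hand side by $\|\fvec\|_{\bH^{-1}(\Om)}\|\vvec_p\|_{\bH^1_0(\Om)}$ then produces $\|p\|_{L^2(\Om)} \leq C_{\div}\|\fvec\|_{\bH^{-1}(\Om)}$.

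The points requiring care are the sign convention in the definition of $T$ that produces the cancellation of the $\pm(q,\div\vvec)_{L^2(\Om)}$ terms, and the orthogonality $\uvec \perp_{\bH^1_0(\Om)} \vvec_p$, which is what delivers the sharp constant $C_{\div}$ in the pressure estimate rather than the factor $2C_{\div}$ one would obtain from a naive bound on $\grad p$ in $\bH^{-1}(\Om)$ via the momentum equation.
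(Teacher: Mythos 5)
Your proposal is correct and follows essentially the same route as the paper: the test pair $(\vvec-\eta\,\vvec_q,\,-q)$ is, up to an overall rescaling by $\gamma=\eta^{-1}$, exactly the paper's choice $(\gamma\,\uvec'-\vvec_{p'},\,-\gamma\,p')$, and the Young-inequality optimization, the use of the symmetric variant \eqref{eq:Tcoer-sym}, and the direct tests with $\vvec=\uvec$ and $\vvec=\vvec_p\in\bV^{\perp}$ (exploiting $(\uvec,\vvec_p)_{\bH^1_0(\Om)}=0$) to obtain the sharp constants in \eqref{eq:WellPosed} all coincide with the paper's argument. The only step you leave implicit is the continuity bound $\|T(\vvec,q)\|_{\Xcal}\leq C_{\max}\|(\vvec,q)\|_{\Xcal}$, which the paper verifies explicitly but which is immediate from \eqref{eq:BAI}.
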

\begin{proof}
We follow here the proof given in \cite{BaCi22,Ciar21}. Let us consider $(\uvec',p')\in\Xcal$ and let us build $(\vvec^\star,q^\star)=T(\uvec',p')\in\Xcal$ satisfying \eqref{eq:Tcoer-sym} (with $V=\Xcal$). We need three main steps.
\begin{itemize}
\item[1.] According to Prop. \ref{prop:diviso}, there exists $\vvec_{p'}\in\bH^1_0(\Om)$ such that: $\ds\div\vvec_{p'}=\nu^{-1}\,p'$ in $\Om$ and
\begin{equation}\label{eq:GiRa}
\|\vvec_{p'}\|_{\bH^1_0(\Om)}^2\leq \left(\frac{C_{\div}}{\nu}\right)^2\,\|p'\|_{L^2(\Om)}^2.
\end{equation}
Let us set $(\vvec^\star,q^\star):=(\gamma\,\uvec'-\vvec_{p'},-\gamma\,p')$, with $\gamma>0$. We obtain:
\begin{equation}\label{eq:coer1}
a_S\,\left(\,(\uvec',p'),\,(\vvec^\star,q^\star)\,\right)=
\nu\,\gamma\,\|\uvec'\|_{\bH^1_0(\Om)}^2+\nu^{-1}\,\|p'\|_{L^2(\Om)}^2-\nu\,(\uvec',\vvec_{p'})_{\bH^1_0(\Omega)}.
\end{equation}
\item[2.] In order to bound the last term of \eqref{eq:coer1}, we use Young inequality and then inequality \eqref{eq:GiRa}:
\begin{equation}\label{eq:GvGvp}
(\uvec',\vvec_{p'})_{\bH^1_0(\Omega)}\leq
\ds\frac{\eta}{2}\|\uvec'\|_{\bH^1_0(\Omega)}^2+\frac{\eta^{-1}}{2}\,\left(\frac{C_{\div}}{\nu}\right)^2\,\|p'\|_{L^2(\Om)}^2.
\end{equation}
\item[3.] 
Using the bound \eqref{eq:GvGvp} in \eqref{eq:coer1} and choosing $\eta=\gamma$, we get:
\[
a_S\,\left(\,(\uvec',p'),\,(\vvec^\star,q^\star)\,\right)\geq\frac{\gamma}{2}\,\nu\,\|\uvec'\|_{\bH^1_0(\Omega)}^2+\nu^{-1}\,\left(1+\frac{\gamma^{-1}}{2}(C_{\div})^2\right)\|p'\|_{L^2(\Om)}^2
\]
Consider now $\gamma=(C_{\div})^2$. \\
Noticing that $\nu\|\uvec\|_{\bH^1_0(\Om)}^2+\nu^{-1}\|p'\|_{L^2(\Om)}^2\geq\ds\frac{\nu}{2}\|(\uvec',p')\|_{\Xcal}^2$, we obtain:
\[
a_S\,\left(\,(\uvec',p'),\,(\vvec^\star,q^\star)\,\right)\geq \frac{\nu}{2}\,C_{\min}\,\|(\uvec',p')\|_{\Xcal}^2\mbox{ where }C_{\min}=\frac{1}{2}\min(\,(C_{\div})^2,1\,).
\]
\end{itemize}
The operator $T$ such that $T(\uvec',p')=(\vvec^\star,q^\star)$ is linear and continuous: \[
\begin{array}{rcl}
\|T(\uvec',p')\|_{\Xcal}&:=&\ds\|\vvec^\star\|_{\bH^1_0(\Om)}+\nu^{-1}\,\|q^\star\|_{L^2(\Omega)}\\
\\
&\leq&\ds\gamma\,\|\uvec'\|_{\bH^1_0(\Om)}+\|\vvec_{p'}\|_{\bH^1_0(\Om)}+\gamma\,\nu^{-1}\,\|p'\|_{L^2(\Omega)},\\
\\
&\leq&\ds
\gamma\,\|\uvec'\|_{\bH^1_0(\Omega)}^2+\,(\,C_{\div}+\gamma)\,\nu^{-1}\,\|p'\|_{L^2(\Omega)}^2,\\
\\
&\leq& C_{\max}\,\|(\uvec',p')\|_{\Xcal},
\end{array}
\]
where $C_{\max}=C_{\div}\,(1+C_{\div})$.\\
\footnote{\textcolor{blue}{Remark that $(\vvec^\star,q^\star)=({\bf{0}},0)\Leftrightarrow(\uvec',p')=({\bf{0}},0)$: the operator $T\in\Lcal(\Xcal,\Xcal)$ is bijective.}} The symmetric and continuous bilinear form $a(\cdot,\cdot)$ is then $T$-coercive and according to Theorem \ref{thm:WellPosed}, Problem \eqref{eq:Stokes-FVa} is well-posed. Let us prove \eqref{eq:WellPosed}. Consider $(\uvec,p)$ the unique solution of Problem \eqref{eq:Stokes-FVa}. Choosing $\vvec=0$, we obtain that $\forall q\in L^2_{zmv}(\Om)$, $(q,\div\uvec)_{L^2(\Om)}=0$, so that $\uvec\in\bV$. Now, choosing $\vvec=\uvec$ and using Cauchy-Schwarz inequality, we have: $\nu\,\|\uvec\|_{\bH^1_0(\Om}^2=\langle\fvec,\uvec\rangle_{\bH^1_0(\Om)}\leq\|\fvec\|_{\bH^{-1}(\Om)}\,\|\uvec\|_{\bH^1_0(\Om)}$, so that: $\|\uvec\|_{\bH^1_0(\Om}\leq\nu^{-1}\,\|\fvec\|_{\bH^{-1}(\Om)}$. Next, we choose in \eqref{eq:Stokes-FVa} $\vvec=\vvec_p\in\bV^{\perp}$, where $\div\vvec_p=-\nu^{-1}\,p$ (see Prop. \ref{prop:diviso}). Noticing that $\uvec\in\bV$ and $\vvec_p\in\bV^\perp$, it holds\footnote{\textcolor{blue}{According to \cite[Cor. I.2.3]{GiRa86}, since $\vvec_p\in\bV^{\perp}$, $\exists p\in L^2_{zmv}(\Om)\,|\,\Delta\vvec_p=\grad q$ in $\bH^{-1}(\Om)$. Integrating by parts twice, we have: $(\uvec,\vvec_p)_{\bH^1_0(\Om)}=-\langle\grad q,\uvec\rangle_{\bH^1_0(\Om)}=(q,\div\uvec)_{L^2(\Om)}=0$.}}: $(\uvec,\vvec_p)_{\bH^1_0(\Om)}=0$. This gives:
\[
\begin{array}{rcl}
-(p,\div\vvec_p)_{L^2(\Om)}&=&\nu^{-1}\,\|p\|_{L^2(\Om)}^2=\langle\fvec,\vvec_p\rangle_{\bH^1_0(\Om)},\\
&\leq&
\|\fvec\|_{\bH^{-1}(\Om)}\,\|\vvec_p\|_{\bH^1_0(\Om)}\leq C_{\div}\,\nu^{-1}\|\fvec\|_{\bH^{-1}(\Om)}\,\|p\|_{L^2(\Om)},
\end{array}
\]
so that: $\|p\|_{L^2(\Om)}\leq C_{\div}\|\fvec\|_{\bH^{-1}(\Om)}$.
\end{proof}
\begin{rema}
We recover the first Banach–Ne\v{c}as–Babu\v{s}ka condition \cite[Thm. 25.9, (BNB1)]{ErGu21-II}:
\[
a_S\,\left(\,(\uvec',p'),\,(\vvec^\star,q^\star)\,\right)\geq \frac{\nu}{2}\,C_{\min}\,(C_{\max})^{-1}\,\|(\uvec',p')\|_{\Xcal}\,\|(\vvec^\star,q^\star)\|_{\Xcal}.
\]
\end{rema}
Let us call $C_{\mathrm{stab}}=\frac{\nu}{2}\,C_{\min}\,(C_{\max})^{-1}$ the stability constant. With the choice of our parameters, $C_{\mathrm{stab}}$ is such that:
\[
C_{\mathrm{stab}}=\left\{
\begin{array}{rl}
\ds\frac{\nu}{4}\,\frac{C_{\div}}{1+C_{\div}}&\mbox{if } 0<C_{\div}\leq 1,\\
\\
\ds\frac{\nu}{4}\,\frac{(C_{\div})^{-1}}{1+C_{\div}}&\mbox{if } 1\leq C_{\div}.
\end{array}\right.
\]
Thus, the T-coercivity approach allows to give an estimate of the stability constant. In our computations, it depends on the choice of the parameters $\eta$ and $\gamma$, so that it could be optimized.\\  
If we were using a conforming discretization to solve Problem \eqref{eq:Stokes-FVa} (a.e. Taylor-Hood finite elements \cite{TaHo73}), we would use the bilinear form $a_S(\cdot,\cdot)$ to state the discrete variational formulation. Let us call the discrete spaces $\bX_{c,h}\subset\bH^1_0(\Om)$ and $Q_{c,h}\subset L^2_{zmv}(\Om)$. Then to prove the discrete T-coercivity, we would need to state the discrete counterpart to Proposition \ref{prop:diviso}. To do so, we can build a linear operator $\Pi_c:\bX\rightarrow\bX_h$, known as Fortin operator, such that (see a.e. \cite[\S 8.4.1]{BoBF13}): 
\begin{align}
\label{eq:WellPosed-c-1}
\exists C_c\,|\,
\forall\vvec\in\bH^1(\Om)\hspace*{5mm}\|\Grad\Pi_c\vvec\|_{\bbL^2(\Om)}&\leq C_c\|\Grad\vvec\|_{\bbL^2(\Om)},\\
\label{eq:WellPosed-c-2}
\forall\vvec\in\bH^1(\Om)\quad(\div\Pi_c\vvec,q_h)_{L^2(\Om)}&=(\div\vvec,q_h)_{L^2(\Om)},\quad\forall q_h\in Q_{c,h}.
\end{align}
Using a nonconforming discretization, we will not use the bilinear form $a_S(\cdot,\cdot)$ to exhibit the discrete variational formulation, but we will need a similar operator to \eqref{eq:WellPosed-c-1}-\eqref{eq:WellPosed-c-2} to prove the discrete T-coercivity, which is stated in Theorem \ref{thm:WellPosed-nc}.
\section{Discretization}\label{sec:disc}
We call $(O,(x_{d'})_{d'=1}^d)$ the Cartesian coordinates system, of orthonormal basis $(\evec_{d'})_{d'=1}^d$. 
Consider $(\Tcal_h)_h$ a simplicial triangulation sequence of $\Om$. For a triangulation $\Tcal_h$, we use the following index sets:
\begin{itemize}
\item $\Ical_K$ denotes the index set of the elements, such that $\Tcal_h:=\ds\bigcup_{\ell\in\Ical_K}K_\ell$  is the set of elements.
\item $\Ical_F$ denotes the index set of the facets\footnote{The term facet stands for face (resp. edge) when $d=3$ (resp. $d=2$).}, such that $\Fcal_h:=\ds\bigcup_{f\in\Ical_F}F_f$ is the set of facets.
\item[]Let $\Ical_F=\Ical_F^i\cup\Ical_F^b$, where $\forall f\in\Ical_F^i$, \textcolor{blue}{$F_f\in\Om$} and $\forall f\in\Ical_F^b$, $F_f\in\pa\Om$.
\item $\Ical_S$ denotes the index set of the vertices, such that $(S_j)_{j\in\Ical_S}$ is the set of vertices.
\item[]Let $\Ical_S=\Ical_S^i\cup\Ical_S^b$, where $\forall j\in\Ical_S^i$, \textcolor{blue}{$S_j\in\Om$} and $\forall j\in\Ical_S^b$, $S_j\in\pa\Om$.
\end{itemize} 
We also define the following index subsets:
\begin{itemize}
\item $\forall\ell\in\Ical_K$, $\Ical_{F,\ell}=\{f\in\Ical_F\,|\,F_f\in K_\ell\},\quad\Ical_{S,\ell}=\{j\in\Ical_S\,|\,S_j\in K_\ell\}$.
\item $\forall j\in\Ical_S$, $\Ical_{K,j}=\{\ell\in\Ical_K\,|\,S_j\in K_\ell\}$, $\quad N_j:=\mathrm{card}(\Ical_{K,j})$.
\end{itemize}
For all $\ell\in\Ical_K$, we call $h_\ell$ and $\rho_\ell$ the diameters of $K_\ell$ and its inscribed sphere respectively, and we let: $\sigma_\ell=\frac{h_\ell}{\rho_\ell}$. When the $(\Tcal_h)_h$ is a shape-regular triangulation sequence  (see a.e. \cite[def. 11.2]{ErGu21-I}), there exists a constant $\sigma>1$ such that for all $h$, for all $\ell\in\Ical_K$, $\sigma_\ell\leq\sigma$. For all $f\in\Ical_F$, $M_f$ denotes the barycentre of $F_f$, and by $\nvec_f$ its unit normal (outward oriented if $F_f\in\pa\Omega$). For all $j\in\Ical_S$, for all $\ell\in\Ical_{K,j}$, $\lambda_{j,\ell}$ denotes the barycentric coordinate of $S_j$ in $K_\ell$; $F_{j,\ell}$ denotes the face opposite to vertex $S_j$ in element $K_\ell$, and $\xvec_{j,\ell}$ denotes its barycentre. We call $\Scal_{j,\ell}$ the outward normal vector of $F_{j,\ell}$ and of norm $|\Scal_{j,\ell}|=|F_{j,\ell}|$. We remind the expression of $\lambda_{j,\ell}$ and the integration formula (25.14) p. 187 of \cite{Ciar91}:
\begin{equation}\label{eq:coorbary}
\begin{array}{rcl}
\ds\forall\xvec\in K_\ell,\,\lambda_{j,\ell}(\xvec)&=&\ds(d\,|K_\ell|)^{-1}\,(\xvec_{j,\ell}-\xvec)\cdot\Scal_{j,\ell}~;\\
\\
\ds\int_{K_\ell}\prod_{i\in\Ical_{S,\ell}}\lambda_{i,\ell}^{\alpha_{i\ell}}&=&
\ds|K_\ell|\,\frac{d!\,\ds\prod_{i\in\Ical_{S,\ell}}\,\alpha_{i,\ell}!}{\left(d+ \ds\sum_{i\in\Ical_{S,\ell}}\alpha_{i,\ell}\right)!}.
\end{array}
\end{equation}
Let introduce spaces of piecewise regular elements:\\
We set $\Pcal_h H^1=\left\{v\in L^2(\Om)\,;\quad\forall \ell\in\Ical_K,\,v_{|K_\ell}\in H^1(K_\ell)\right\}$, endowed with the scalar product~:
\[
(v,w)_h:=\sum_{\ell\in\Ical_K}(\grad v,\grad w)_{\bL^2(K_\ell)}\quad
\|v\|_h^2=\sum_{\ell\in\Ical_K}\|\grad v\|^2_{\bL^2(K_\ell)}.
\]
We set $\Pcal_h\bH^1=[\Pcal_hH^1]^d$, endowed with the scalar product~:
\[
(\vvec,\wvec)_h:=\sum_{\ell\in\Ical_K}(\Grad\vvec,\Grad\wvec)_{\bbL^2(K_\ell)}\quad
\|\vvec\|_h^2=\sum_{\ell\in\Ical_K}\|\Grad \vvec\|^2_{\bbL^2(K_\ell)}.
\]
Let $f\in\Ical_F^i$ such that $F_f=\pa K_L\cap\pa K_R$ and $\nvec_f$ is outward $K_L$ oriented.\\
The jump (resp. average) of a function $v\in \Pcal_h H^1$ across the facet $F_f$ is defined as follows: $[v]_{F_f}:=v_{|K_L}-v_{|K_R}$ (resp. $\{v\}_{F_f}:=\frac{1}{2}(v_{|K_L}+v_{|K_R})\,$). For $f\in\Ical_F^b$, we set: $[v]_{F_f}:=v_{|F_f}$ and $\{v\}_{F_f}:=v_{|F_f}$.\\
We set $\Pcal_h\bH(\div)=\left\{\vvec\in \bL^2(\Omega)\,;\quad\forall\ell\in\Ical_K,\,\vvec_{|K_\ell}\in\bH(\div;\,K_\ell)\right\}$, and we define the operator $\div_h$ such that: 
\[
\forall\vvec\in\Pcal_h\bH(\div),\,\forall q\in L^2(\Om),\quad
(\div_h\vvec,q)=\sum_{\ell\in\Ical_K}(\div\vvec,q)_{L^2(K_\ell)}.
\]
We recall classical finite elements estimates \cite{ErGu21-I}.
Let $\hatK$ be the reference simplex and $\hatF$ be the reference facet. For $\ell\in\Ical_K$ (resp. $f\in\Ical_F$), we denote by $T_\ell:\hatK\rightarrow K_\ell$ (resp. $T_f:\hatF\rightarrow F_f$) the geometric mapping such that $\forall\hat{\xvec}\in\hatK$, $\xvec_{|K_\ell}=T_\ell(\hat{\xvec})=\bbB_\ell\hat{\xvec}+\bvec_\ell$ (resp. $\xvec_{|F_f}=T_f(\hat{\xvec})=\bbB_f\hat{\xvec}+\bvec_f$), and we set $J_\ell=\mathrm{det}(\bbB_\ell)$ (resp. $J_f=\mathrm{det}(\bbB_f)$). There holds:
\[
|J_\ell|=d!\,|K_\ell|,\quad\|\bbB_\ell\|=\frac{h_\ell}{\rho_{\hatK}},\quad\|{\bbB_\ell}^{-1}\|=\frac{h_{\hatK}}{\rho_\ell},\quad |J_f|=(d-1)!\,|F_f|.
\]
For $v\in L^2(K_\ell)$, we set $\hat{v}_\ell=v\circ T_\ell$. For $v\in v^2(F_f)$, we set: $\hat{v}_f=v\circ T_f$. Changing the variable, we get: 
\begin{equation}\label{eq:vKvF}
\|v\|_{L^2(K_\ell)}^2=|J_\ell|\,\|\hat{v}_\ell\|_{L^2(\hatK)}^2,
\quad\mbox{and}\quad
\|v\|_{L^2(F_f)}^2=|J_f|\,\|\hat{v}_f\|_{L^2(\hatF)}^2.
\end{equation}
Let $v\in \Pcal_h H^1$. By changing the variable, $\grad v_{|K_\ell}=({\bbB_\ell}^{-1})^T\grad_{\hat{\xvec}}\hat{v}_\ell$, and it holds:
\begin{equation}\label{eq:gradvK}
\begin{array}{rcl}
\|\grad_{\hat{\xvec}}\hat{v}_\ell\|_{\bL^2(\hatK)}^2&\leq&\norm{\bbB_\ell}^2\,|J_\ell|^{-1}\,\|\grad v\|_{\bL^2(K_\ell)}^2,\\
&\lesssim&\sigma^2\,(\rho_\ell)^{-(d-2)}\,\|\grad v\|_{\bL^2(K_\ell)}.
\end{array}
\end{equation}
Let us recall some useful inequalities that we will need:
\begin{itemize}
\item The Poincaré-Steklov inequality in cells \cite[Lemma 12.11]{ErGu21-I}:\\
for all $\ell\in\Ical_K$ ($K_\ell$ is a convex set), $\forall v\in H^1(K_\ell)$:
\begin{equation}\label{eq:PSinC}
\|\ul{v}_\ell\|_{L^2(K_\ell)}\leq\pi^{-1}h_\ell\|\grad v\|_{\bL^2(K_\ell)},\quad\mbox{where }\ul{v}_\ell=v_{|K_\ell}-\frac{\int_{K_\ell}v}{|K_\ell|}.
\end{equation}
\item The multiplicative trace inequality as written in the proof of \cite[lemma 12.15]{ErGu21-I} for $p=2$: for all $\ell\in\Ical_K$, for all $f\in\Ical_{F,\ell}$, $\forall v\in H^1(K_\ell)$:
\begin{equation}\label{eq:MTI}
\|v\|_{L^2(F_f)}^2\leq\frac{|F_f|}{|K_\ell|}\|v\|_{L^2(K_\ell)}\left(\|v\|_{L^2(K_\ell)}+\frac{2}{d}l_{(\ell,f)}^{\perp}\|\grad v\|_{\bL^2(K_\ell)}\right),
\end{equation}
where $l_{(\ell,f)}^{\perp}$ is the largest length of an edge in $K_\ell$ and not belonging to $F_f$.
\item Combining \eqref{eq:PSinC} and \eqref{eq:MTI}, we get that $\forall v\in H^1(K_\ell)$:
\begin{equation}\label{eq:MTI-PSinC}
\|\ul{v}_\ell\|_{L^2(F_f)}^2\leq\frac{|F_f|}{|K_\ell|}\pi^{-1}h_\ell\,\left(\pi^{-1}h_\ell+\frac{2}{d}l_{(\ell,f)}^{\perp}\right)\,\|\grad v\|_{\bL^2(K_\ell)}^2.
\end{equation}
\end{itemize} 
Notice that in the reference element, inequality \eqref{eq:MTI-PSinC} reads:
\begin{equation}\label{eq:MTI-PSinC-ref}
\forall\ell\in\Ical_K,\,\forall f\in\Ical_{F,\ell},\quad\|\widehat{(\ul{v}_\ell)}_f\|_{L^2(\hatF)}^2\lesssim\|\grad_{\hat{\xvec}}\hat{v}_\ell\|_{\bL^2({\hatK})}^2.
\end{equation}
For all $D\subset\R^d$, we call $P^k(D)$ the set of order $k$ polynomials on $D$, $\bP^k(D)=(P^k(D))^d$, and we consider the broken polynomial space:
\[
P^k_{disc}(\Tcal_h)=\left\{q\in L^2(\Om);\quad \forall\ell\in\Ical_K,\,q_{|K_\ell}\in P^k(K_\ell)\right\}.
\]
\section{The nonconforming mixed finite element method for Stokes}\label{sec:Tcoerh}
The nonconforming finite element method was introduced by Crouzeix and Raviart in \cite{CrRa73} to solve Stokes Problem \eqref{eq:Stokes}. We approximate the vector space $\bH^1(\Om)$ component by component by piecewise polynomials of order $k\in\N^\star$. 
Let us consider $X_h$ (resp. $X_{0,h}$), the space of nonconforming approximation of $H^1(\Om)$ (resp. $H^1_0(\Om)$) of order $k$:
\begin{equation}
\label{eq:Xh}
\begin{array}{c}
X_h=\ds\left\{v_h\in P^k_{disc}(\Tcal_h)\,;\quad\forall f\in\Ical_F^i,\,\forall q_h\in P^{k-1}(F_f),\,\int_{F_f}[v_h]\,q_h=0\right\}\,;\\
\\
X_{0,h}=\ds\left\{v_h\in X_h\,;\quad\forall f\in\Ical_F^b,\,\forall q_h\in P^{k-1}(F_f),\,\ds\int_{F_f} v_h\,q_h=0\right\}.
\end{array}
\end{equation}
The condition on the jumps of $v_h$ on the inner facets is often called the patch-test condition.\\
\begin{prop}\label{pro:broknorm}
The broken norm $v_h\rightarrow\|v_h\|_h$ is a norm over $X_{0,h}$. 
\end{prop}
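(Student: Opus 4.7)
The plan is to verify the definiteness property, which is the only nontrivial norm axiom: the mapping $v_h \mapsto \|v_h\|_h$ is clearly a seminorm (positive homogeneity is immediate and the triangle inequality follows from the cell-wise Cauchy--Schwarz inequality), so I only need to show that $\|v_h\|_h = 0$ forces $v_h = 0$.

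Assuming $\|v_h\|_h = 0$, the vanishing of $\grad v_h$ on each element yields $v_h|_{K_\ell} \equiv c_\ell$ for some constant $c_\ell \in \R$. The next step is to exploit the interior patch-test condition by taking the legal test function $q_h = 1 \in P^{k-1}(F_f)$ (this is where $k \geq 1$ is crucial); for an interior facet $F_f = \pa K_L \cap \pa K_R$ this reads
\[
0 = \int_{F_f}[v_h]\, q_h = (c_L - c_R)\,|F_f|,
\]
so $c_L = c_R$. The key structural fact is that the dual graph of the conforming simplicial mesh of the connected domain $\Om$ is itself connected, which propagates equality of constants along chains of interior facets and produces a single global constant $c$ with $v_h \equiv c$ on $\Om$.

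To finish, I apply the boundary condition of $X_{0,h}$: picking any $f \in \Ical_F^b$ and again testing against $q_h = 1 \in P^{k-1}(F_f)$ gives $c\,|F_f| = 0$, hence $c = 0$ and $v_h \equiv 0$, which establishes definiteness.

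I do not expect a genuine obstacle in this argument — it is the standard Crouzeix--Raviart-type verification — and the only two points that deserve explicit mention in the write-up are (i) the use of $k \geq 1$ so that constants lie in $P^{k-1}(F_f)$, and (ii) the appeal to connectedness of $\Om$ (inherited by the dual graph of the triangulation) to pass from facet-by-facet equality of constants to a single global constant.
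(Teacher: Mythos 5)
Your proof is correct and follows essentially the same route as the paper: $\|v_h\|_h=0$ forces $v_h$ to be piecewise constant, the patch-test condition kills the jumps across interior facets so that connectedness of $\Om$ yields a single global constant, and the boundary condition then gives $v_h=0$. You merely make explicit two points the paper leaves implicit (testing with $q_h=1\in P^{k-1}(F_f)$ and the connectedness of the dual graph), which is a fair elaboration rather than a different argument.
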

\begin{proof}
Let $v_h\in X_{0,h}$ such that $\|v_h\|_h=0$. Then for all $\ell\in\Ical_K$, $v_{h|K_\ell}$ is a constant. For all $f\in\Ical_F^i$ the jump $[v_h]_{F_f}$ vanishes, so that $v_h$ is a constant over $\Om$. We deduce from the discrete boundary condition that $v_h=0$. 
\end{proof}
The space of nonconforming approximation of $\bH^1(\Om)$ (resp. $\bH^1_0(\Om)$) of order $k$ is $\bX_h=(X_h)^d$ (resp. $\bX_{0,h}=(X_{0,h})^d$). We set $\Xcal_h:=\bX_{0,h}\times Q_h$ where $Q_h=P^{k-1}_{disc}(\Tcal_h)\cap L^2_{zmv}(\Om)$. We deduce from Proposition \ref{pro:broknorm} the
\begin{prop}
The broken norm defined below is a norm on $\Xcal_h$:
\begin{equation}
\|(\cdot,\cdot)\|_{\Xcal_h}:\left\{\begin{array}{rcl}\Xcal_h&\mapsto&\R \\(\vvec_h,q_h)&\rightarrow&\ds\|\vvec_h\|_h+\nu^{-1}\,\|q_h\|_{L^2(\Om)}
\end{array}\right..
\end{equation}
\end{prop}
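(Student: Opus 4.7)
The plan is to verify the four norm axioms by reducing them to norms we already control: the broken seminorm on $X_{0,h}$ (which Proposition~\ref{pro:broknorm} upgrades to a norm) and the standard $L^2(\Om)$-norm on $Q_h$. The proof is essentially bookkeeping on a product space with the $\ell^1$-product norm structure.

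First I would observe that the broken norm extends componentwise from $X_{0,h}$ to $\bX_{0,h}=(X_{0,h})^d$: for $\vvec_h=(v_{h,1},\ldots,v_{h,d})\in\bX_{0,h}$, we have $\|\vvec_h\|_h^2=\sum_{i=1}^d\|v_{h,i}\|_h^2$, and since each $\|\cdot\|_h$ is a norm on $X_{0,h}$ by Proposition~\ref{pro:broknorm}, the usual argument (Euclidean combination of norms on a finite product) shows $\|\cdot\|_h$ is a norm on $\bX_{0,h}$. Likewise $\|\cdot\|_{L^2(\Om)}$ is a norm on $Q_h\subset L^2_{zmv}(\Om)$, so $\nu^{-1}\|\cdot\|_{L^2(\Om)}$ (with $\nu>0$ fixed) is also a norm on $Q_h$.

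Next I would check the four axioms on $\Xcal_h=\bX_{0,h}\times Q_h$. Nonnegativity is immediate since both summands are $\geq 0$. Positive homogeneity follows by pulling out $|\lambda|$ from each summand. The triangle inequality is obtained by applying the triangle inequality in each factor and summing:
\[
\|(\vvec_h+\wvec_h,q_h+r_h)\|_{\Xcal_h}\leq\|\vvec_h\|_h+\|\wvec_h\|_h+\nu^{-1}\|q_h\|_{L^2(\Om)}+\nu^{-1}\|r_h\|_{L^2(\Om)}.
\]
For definiteness, if $\|(\vvec_h,q_h)\|_{\Xcal_h}=0$ then both nonnegative summands vanish, so $\|\vvec_h\|_h=0$ and $\|q_h\|_{L^2(\Om)}=0$; the first forces $\vvec_h=\mathbf{0}$ by the extension of Proposition~\ref{pro:broknorm} to $\bX_{0,h}$, and the second forces $q_h=0$.

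There is no real obstacle here: the only non-routine step is definiteness, which is already handled by Proposition~\ref{pro:broknorm} applied componentwise. The proof is little more than a remark that a finite sum of norms on factors of a product space defines a norm on the product.
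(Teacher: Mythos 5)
Your proposal is correct and follows the same route as the paper, which simply deduces this proposition from Proposition \ref{pro:broknorm} and leaves the routine product-space verification implicit; your write-up just makes that bookkeeping explicit. No gaps.
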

Thus, the product space $\Xcal_h$ endowed with the broken norm $\|\cdot\|_{\Xcal_h}$ is a Hilbert space. 
\begin{prop}
The following discrete Poincaré–Steklov inequality holds: there exists a constant $C_{PS}^{nc}$ independent of $\Tcal_h$ such that
\begin{equation}\label{eq:cps-constant}
\forall\vvec_h\in\bX_{0,h},\quad\|\vvec_h\|_{\bL^2(\Om)}\leq C_{PS}^{nc}\,\|\vvec_h\|_h,
\end{equation}
where $C_{PS}^{nc}$ is independent of $\Tcal_h$ and is proportional to the diameter of $\Om$.
\end{prop}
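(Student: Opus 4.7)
The plan is to combine a conforming reconstruction of $v_h$ with the continuous Poincaré--Steklov inequality \eqref{eq:Poincare}. Since $\|\vvec_h\|_{\bL^2(\Om)}^2 = \sum_{i=1}^d \|v_{h,i}\|_{L^2(\Om)}^2$ and $\|\vvec_h\|_h^2 = \sum_{i=1}^d \|v_{h,i}\|_h^2$, it is enough to establish the scalar version $\|v_h\|_{L^2(\Om)} \leq C_{PS}^{nc}\,\|v_h\|_h$ for $v_h \in X_{0,h}$, with $C_{PS}^{nc}$ independent of $\Tcal_h$ and proportional to $\mathrm{diam}(\Om)$.

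I would introduce an Oswald-type averaging operator $\mathcal{S}_h : X_{0,h} \to X_{0,h} \cap H^1_0(\Om)$: at every degree of freedom of the standard Lagrange $P^k$ space shared by several cells, set $\mathcal{S}_h v_h$ to be the arithmetic mean of the local values of $v_h$ there; at every degree of freedom lying on $\partial\Om$, set $\mathcal{S}_h v_h = 0$. Scaling to the reference simplex, shape-regularity of $(\Tcal_h)_h$, and the trace inequality \eqref{eq:MTI-PSinC-ref} then yield, with constants $C_1, C_2 > 0$ depending only on $\sigma$ and $k$,
\begin{equation*}
\|\mathcal{S}_h v_h\|_h \leq C_1\,\|v_h\|_h, \qquad
\|v_h - \mathcal{S}_h v_h\|_{L^2(K_\ell)} \leq C_2\,h_\ell\,\|v_h\|_{h,\omega_\ell}, \quad \forall \ell \in \Ical_K,
\end{equation*}
where $\omega_\ell = \bigcup_{j \in \Ical_{S,\ell}} \bigcup_{\ell' \in \Ical_{K,j}} K_{\ell'}$ is the vertex patch of $K_\ell$ and $\|\cdot\|_{h,\omega_\ell}$ the broken seminorm restricted to $\omega_\ell$.

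Summing the cellwise $L^2$-estimate using the bounded patch overlap (a consequence of shape-regularity) and the crude bound $h_\ell \leq \mathrm{diam}(\Om)$ gives $\|v_h - \mathcal{S}_h v_h\|_{L^2(\Om)} \leq C_2'\,\mathrm{diam}(\Om)\,\|v_h\|_h$. Applying \eqref{eq:Poincare} to $\mathcal{S}_h v_h \in H^1_0(\Om)$ and combining through the triangle inequality,
\begin{equation*}
\|v_h\|_{L^2(\Om)} \leq \|v_h - \mathcal{S}_h v_h\|_{L^2(\Om)} + C_{PS}\,\|\mathcal{S}_h v_h\|_h \leq \bigl(C_2'\,\mathrm{diam}(\Om) + C_1\,C_{PS}\bigr)\,\|v_h\|_h.
\end{equation*}
Since $C_{PS}$ is itself proportional to $\mathrm{diam}(\Om)$, the constant $C_{PS}^{nc} := C_2'\,\mathrm{diam}(\Om) + C_1\,C_{PS}$ is independent of $\Tcal_h$ and proportional to $\mathrm{diam}(\Om)$, as claimed.

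The main obstacle is the local estimate $\|v_h - \mathcal{S}_h v_h\|_{L^2(K_\ell)} \leq C_2\,h_\ell\,\|v_h\|_{h,\omega_\ell}$. The discrepancy $(\mathcal{S}_h v_h - v_h)|_{K_\ell}$ is a polynomial whose nodal values are precisely the averaged differences between $v_{h|K_\ell}$ and its neighbours at shared geometric entities; these differences are controlled by $\|[v_h]_{F_f}\|_{L^2(F_f)}$ on the adjacent interior facets and by $\|v_h\|_{L^2(F_f)}$ on boundary facets $F_f \subset \partial\Om$. This is where the patch-test conditions in \eqref{eq:Xh} become essential: they force these jumps and boundary traces to have zero moments against $P^{k-1}(F_f)$, so a Bramble--Hilbert argument combined with \eqref{eq:MTI-PSinC} controls them by $h_\ell\,\|\grad v_h\|_{\bL^2(K_{\ell'})}$ on the adjacent cells, and scaling back to $K_\ell$ yields the claimed estimate.
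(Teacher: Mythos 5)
Your proof is correct and takes a genuinely different route from the paper's. The paper does not argue the inequality itself: it cites \cite[Lemma~36.6]{ErGu21-II} for $k=1$ --- where the bound is obtained by a duality argument, testing $v_h$ against $\div\boldsymbol{\sigma}$ for a vector field $\boldsymbol{\sigma}\in\bH^1(\Om)$ with $\div\boldsymbol{\sigma}=v_h$ and $\|\boldsymbol{\sigma}\|_{\bH^1(\Om)}\lesssim\|v_h\|_{L^2(\Om)}$, integrating by parts cell by cell, and exploiting the zero mean of the jumps to replace $\boldsymbol{\sigma}\cdot\nvec_f$ by its facet average in the resulting consistency terms --- and simply notes that this survives for $k\ge 2$ thanks to the patch test, with \cite[Theorem~C.1]{Saut22} as an alternative reference. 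You instead give the Brenner/Karakashian--Pascal style proof through a conforming companion: Oswald averaging $\mathcal{S}_h$, the stability bound $\|\mathcal{S}_hv_h\|_h\lesssim\|v_h\|_h$, the local approximation bound $\|v_h-\mathcal{S}_hv_h\|_{L^2(K_\ell)}\lesssim h_\ell\,\|v_h\|_{h,\omega_\ell}$, and then the continuous inequality \eqref{eq:Poincare}. Both arguments hinge on the same mechanism: the patch test forces the facet jumps (and boundary traces) to have zero mean, so $h_F^{-1}\|[v_h]\|_{L^2(F_f)}^2$ is controlled by the broken gradient on the adjacent cells via \eqref{eq:MTI-PSinC}; for a general discontinuous function this step fails and so does the inequality. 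Your route is self-contained, makes the dependence of the constant on $\sigma$, $k$ and $\mathrm{diam}(\Om)$ explicit, and its first ingredient is essentially the averaging operator $\Ical_h$ with bound \eqref{eq:c-constant} that the paper introduces a few lines later, so nothing outside the paper's toolbox is needed; the duality route avoids constructing any conforming object but requires a divergence lifting in the spirit of Proposition~\ref{prop:diviso}. Two details worth writing out if you expand the sketch: controlling the nodal discrepancy at a vertex shared by several cells requires chaining jumps across the facets of the vertex patch (finitely many cells by shape regularity) together with an inverse inequality on $F_f$ to pass from the pointwise value of $[v_h]_{F_f}$ to $\|[v_h]_{F_f}\|_{L^2(F_f)}$; and only the zeroth-order moment of the patch test is actually used, so no Bramble--Hilbert argument beyond \eqref{eq:MTI-PSinC} is required.
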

\begin{proof}
\textcolor{blue}{
Inequality \eqref{eq:cps-constant} is stated in \cite[Lemma 36.6]{ErGu21-II} for $k=1$, but one can check that the proof holds true for higher orders, thanks to the patch-test condition. An alternative proof is given in \cite[Theorem C.1]{Saut22}.}
\end{proof}
We consider the discrete continuous bilinear form $a_{S,h}(\cdot,\cdot)$ such that~:
\[
\left\{
\begin{array}{rcl}
a_{S,h}:\Xcal_h\times\Xcal_h&\rightarrow&\R\\
(\uvec'_h,p'_h)\times(\vvec_h,q_h)&\mapsto&\nu(\uvec'_h,\vvec_h)_h-(p'_h,\div_h\vvec_h)-(q_h,\div_h\uvec'_h)
\end{array}\right..
\]
Let $\ell_\fvec\in\Lcal(\Xcal_h,\R)$ be such that :
\[
\forall(\vvec_h,q_h)\in\Xcal_h,\quad\ell_\fvec\,\left(\,(\vvec_h,q_h)\,\right)=\left\{
\begin{array}{rl}
(\fvec,\vvec_h)_{\bL^2(\Om)}&\mbox{if }\fvec\in\bL^2(\Om)\\
\langle\fvec,\Ical_h(\vvec_h)\rangle_{\bH^1_0(\Om)}&\mbox{if }\fvec\not\in\bL^2(\Om)
\end{array}\right.,
\]
where $\Ical_h:\bX_{0,h}\rightarrow\bY_{0,h}$, with $\bY_{0,h}=\{\vvec_h\in\bH^1_0(\Om)\,;\quad\forall\ell\in\Ical_K,\,\vvec_{h|K_\ell}\in \bP^k(K_\ell)\}$, is the averaging operator described in \cite[\S 22.4.1]{ErGu21-I}. There exists a constant $C^{nc}_{\Ical_h}>0$ independent of $\Tcal_h$ such that~:
\begin{equation}
\label{eq:c-constant}
\|\Ical_h\vvec_h\|_{\bH^1_0(\Om)}\leq C^{nc}_{\Ical_h}\,\|\vvec_h\|_h,\quad\forall\vvec_h\in\bX_{0,h}.
\end{equation}
The nonconforming discretization of Problem \eqref{eq:Stokes-FVa} reads:\\
Find $(\uvec_h,p_h)\in\Xcal_h$ such that
\begin{equation}\label{eq:Stokes-FVah}
\quad a_{S,h}\,\left((\uvec_h,p_h),(\vvec_h,q_h)\right)=\ell_\fvec\,\left(\,(\vvec_h,q_h)\,\right)\quad\forall(\vvec_h,q_h)\in\Xcal_h.
\end{equation}
Let us prove that Problem \eqref{eq:Stokes-FVah} is well-posed using the T-coercivity theory.
\begin{theo}
\label{thm:WellPosed-nc}
Suppose that there exists a Fortin operator $\Pi_{nc}:\bH^1(\Om)\rightarrow\bX_h$ such that 
\textcolor{blue}{
\begin{align}
\label{eq:WellPosed-nc-1}
\exists C_{nc}\,|\,
\forall\vvec\in\bH^1(\Om)\hspace*{5mm}\|\Pi_{nc}\vvec\|_h&\leq C_{nc}\|\Grad\vvec\|_{\bbL^2(\Om)},\\
\label{eq:WellPosed-nc-2}
\forall\vvec\in\bH^1(\Om)\quad(\div_h\Pi_{nc}\vvec,q_h)&=(\div\vvec,q_h)_{L^2(\Om)},\quad\forall q\in Q_h,
\end{align}
}
where the constant $C_{nc}$ does not depend on $h$. Then Problem \eqref{eq:Stokes-FVah} is well-posed. Moreover, it admits one and only one solution $(\uvec_h,p_h)$ such that:
\begin{equation}\label{eq:stabiNC}
\begin{array}{rl}
\mbox{\emph{if }}\fvec\in\bL^2(\Om):&
\left\{
\begin{array}{rcl}
\|\uvec_h\|_h&\leq& C_{PS}^{nc}\,\nu^{-1}\,\|\fvec\|_{\bL^2(\Om)}\\
\\
\|p_h\|_{L^2(\Om)}&\leq& 2\,C_{PS}^{nc}\,C_{\div}^{nc}\,\|\fvec\|_{\bL^2(\Om)}
\end{array}\right.,\\
\\
\mbox{\emph{if }}\fvec\not\in\bL^2(\Om):&\left\{
\begin{array}{rcl}
\|\uvec_h\|_h&\leq&C_{\Ical_h}^{nc}\,\nu^{-1}\,\|\fvec\|_{\bH^{-1}(\Om)}\\
\\
\|p_h\|_{L^2(\Om)}&\leq& 2\,C_{\Ical_h}^{nc}\,C_{\div}^{nc}\,\|\fvec\|_{\bH^{-1}(\Om)}
\end{array}\right.,
\end{array}
\end{equation}
where $C_{\div}^{nc}=C_{\div}\,C_{nc}$. 
Additionally, we can compute classical a priori error estimates (see \cite[Theorems 3, 4 and 6]{CrRa73}). Suppose that $(\uvec,p)\in\bH^{1+k}(\Om)\times H^k(\Om)$, we then have the estimate:
\begin{equation}
\label{eq:aprioriEE}
\|\uvec-\uvec_h\|_{\bL^2(\Om)}\leq C\sigma^\ell\,h^{k+1}\,
\left(|\uvec|_{\bH^{k+1}(\Om)}+\nu^{-1}\,|p|_{H^k(\Om)}\right),
\end{equation}
where the constant $C>0$ is independent of $h$, $\sigma$ is the shape regularity constant and the exponent $\ell\in\N^\star$ depends on $k$.
\end{theo}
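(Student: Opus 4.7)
The strategy is to mirror, step for step, the three-step T-coercivity proof of Theorem \ref{thm:StokesWellPosed}, replacing the continuous isomorphism of Proposition \ref{prop:diviso} by its composition with the discrete Fortin operator $\Pi_{nc}$. This produces an operator $T_h \in \Lcal(\Xcal_h, \Xcal_h)$ satisfying the symmetric T-coercivity inequality \eqref{eq:Tcoer-sym} on $\Xcal_h$, so that well-posedness of \eqref{eq:Stokes-FVah} follows from Theorem \ref{thm:WellPosed}.

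Concretely, given $(\uvec'_h, p'_h) \in \Xcal_h$, I would first apply Proposition \ref{prop:diviso} with $\nu_p = \nu$ to produce $\vvec_{p'_h} \in \bV^\perp$ with $\div \vvec_{p'_h} = \nu^{-1} p'_h$ and $\|\vvec_{p'_h}\|_{\bH^1_0(\Om)} \leq (C_{\div}/\nu)\|p'_h\|_{L^2(\Om)}$, and then set $\vvec^h_{p'_h} := \Pi_{nc}\vvec_{p'_h}$. Assumption \eqref{eq:WellPosed-nc-1} gives $\|\vvec^h_{p'_h}\|_h \leq (C_{\div}^{nc}/\nu)\|p'_h\|_{L^2(\Om)}$, and \eqref{eq:WellPosed-nc-2} gives $(\div_h \vvec^h_{p'_h}, q_h) = \nu^{-1}(p'_h, q_h)$ for every $q_h \in Q_h$ — exactly the discrete surrogate needed to reproduce the algebraic identity \eqref{eq:coer1}. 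Setting
\[
T_h(\uvec'_h, p'_h) := (\gamma \uvec'_h - \vvec^h_{p'_h},\, -\gamma p'_h), \qquad \gamma = (C_{\div}^{nc})^2,
\]
and applying Young's inequality with $\eta = \gamma$ as in \eqref{eq:GvGvp} gives a lower bound $a_{S,h}((\uvec'_h,p'_h), T_h(\uvec'_h,p'_h)) \geq \tfrac{\nu}{2} C_{\min}^{nc} \|(\uvec'_h,p'_h)\|_{\Xcal_h}^2$ with an explicit constant depending only on $C_{\div}^{nc}$. Linearity and continuity of $T_h$ are verified exactly as in the continuous case, the constant $C_{\max}$ being replaced by $C_{\div}^{nc}(1 + C_{\div}^{nc})$.

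For the stability bounds \eqref{eq:stabiNC}, I would test \eqref{eq:Stokes-FVah} with $(\uvec_h, 0)$: the second discrete equation applied with $q_h = p_h$ already gives $(p_h, \div_h \uvec_h) = 0$, so $\nu \|\uvec_h\|_h^2 = \ell_\fvec(\uvec_h)$. Bounding the right-hand side via the discrete Poincar\'e--Steklov inequality \eqref{eq:cps-constant} when $\fvec \in \bL^2(\Om)$, or via the averaging estimate \eqref{eq:c-constant} otherwise, yields the velocity lines of \eqref{eq:stabiNC}. For the pressure, I would test with $(\Pi_{nc}\vvec_p, 0)$, where $\vvec_p \in \bV^\perp$ satisfies $\div\vvec_p = -\nu^{-1}p_h$ (legal since $p_h \in Q_h \subset L^2_{zmv}(\Om)$); then \eqref{eq:WellPosed-nc-2} produces $-(p_h, \div_h\Pi_{nc}\vvec_p) = \nu^{-1}\|p_h\|_{L^2(\Om)}^2$, and the remaining two terms $\nu(\uvec_h,\Pi_{nc}\vvec_p)_h$ and $\ell_\fvec(\Pi_{nc}\vvec_p)$ are each controlled by $\|\Pi_{nc}\vvec_p\|_h \leq (C_{\div}^{nc}/\nu)\|p_h\|_{L^2(\Om)}$ together with the velocity bound already established, which gives the factor $2\,C_{PS}^{nc}\,C_{\div}^{nc}$ (respectively $2\,C_{\Ical_h}^{nc}\,C_{\div}^{nc}$) after cancelling one power of $\|p_h\|_{L^2(\Om)}$. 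The a priori error estimate \eqref{eq:aprioriEE} is not a T-coercivity statement but a standard second Strang lemma combined with the interpolation estimates of \cite{CrRa73}, and I would simply cite \cite[Theorems 3, 4 and 6]{CrRa73}.

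The only delicate point is that $\Pi_{nc}$, as stated, is defined on $\bH^1(\Om)$ with range in $\bX_h$, whereas the test functions $\vvec^h_{p'_h}$ and $\Pi_{nc}\vvec_p$ above must lie in $\bX_{0,h}$ for them to be admissible in $\Xcal_h$. The hidden hypothesis is therefore that $\Pi_{nc}$ maps $\bH^1_0(\Om)$ into $\bX_{0,h}$, a condition that must be verified explicitly when the Fortin operators are constructed for the Crouzeix--Raviart and Fortin--Soulie elements in Sections \ref{sec:CrRa} and \ref{sec:FoSo}. Apart from this boundary-preservation issue, the proof is a direct discrete transcription of the continuous one.
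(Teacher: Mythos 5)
Your proposal is correct and follows essentially the same route as the paper: the same three-step discrete T-coercivity argument with $\vvec_{h,p'_h}=\Pi_{nc}\vvec_{p'_h}$, $\gamma_{nc}=\eta_{nc}=(C_{\div}^{nc})^2$, the same constants $C_{\min}^{nc}$ and $C_{\max}^{nc}=C_{\div}^{nc}(1+C_{\div}^{nc})$, the same test functions for the stability bounds, and the same citation of Crouzeix--Raviart for the a priori estimate. Your remark that $\Pi_{nc}$ must map $\bH^1_0(\Om)$ into $\bX_{0,h}$ for the test functions to be admissible is a fair observation about a hypothesis the paper leaves implicit, but it does not change the argument.
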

\begin{proof}
Let us consider $(\uvec'_h,p'_h)\in\Xcal_h$ and let us build $(\vvec_h^\star,q_h^\star)\in\Xcal_h$ satisfying \eqref{eq:Tcoer-sym} (with $V=\Xcal_h$). We follow the three main steps of the proof of Theorem \ref{thm:WellPosed}. 
\begin{itemize}
\item[1.]
According to Proposition \ref{prop:diviso}, there exists $\vvec_{p'_h}\in\bV^{\perp}$ such that $\div\vvec_{p'_h}=\ds\nu^{-1}\,p'_h$ in $\Om$ and:
\[
\|\vvec_{p'_h}\|_{\bH^1_0(\Om)}^2\leq \left(\frac{C_{\div}}{\nu}\right)^2\,\|p'_h\|_{L^2(\Om)}^2.
\]
Consider $\vvec_{h,p'_h}=\Pi_{nc}\vvec_{p'_h}$, for all $q_h\in Q_h$, we have:  $(\div_h\vvec_{h,p'_h},q_h)=\ds\nu^{-1}\,(p'_h,q_h)_{L^2(\Om)}$ and
\begin{equation}
\label{eq:GiRa-nc}
\|\vvec_{h,p'_h}\|_h^2\leq \left(\frac{C_{\div}^{nc}}{\nu}\right)^2\,\|p'_h\|_{L^2(\Om)}^2\mbox{ where }C_{\div}^{nc}=C_{nc}\,C_{\div}.
\end{equation}
Let us set $(\vvec_h^\star,q_h^\star):=(\gamma_{nc}\uvec'_h-\vvec_{h,p'_h},-\gamma_{nc}\,p'_h)$, with $\gamma_{nc}>0$. We obtain:
\begin{equation}\label{eq:coer1-nc}
a_{S,h}\,\left(\,(\uvec'_h,p'_h),\,(\vvec_h^\star,q_h^\star)\,\right)=
\nu\,\gamma_{nc}\|\uvec'_h\|_h^2+\nu^{-1}\| p'_h\|_{L^2(\Om)}^2-\nu(\uvec'_h,\vvec_{h,p'_h})_h.
\end{equation}
\item[2.] In order to bound the last term of \eqref{eq:coer1-nc}, we use Young inequality and then inequality \eqref{eq:GiRa-nc}:
\begin{equation}\label{eq:GvGvp-nc}
(\uvec'_h,\vvec_{h,p'_h})_h\leq
\ds\frac{\eta_{nc}}{2}\|\uvec'_h\|_h^2+\frac{\eta_{nc}^{-1}}{2}\,\left(\frac{C_{\div}^{nc}}{\nu}\right)^2\,\| p'_h\|_{L^2(\Om)}^2.
\end{equation}
\item[3.] 
Using the bound \eqref{eq:GvGvp-nc} in \eqref{eq:coer1-nc} and choosing $\ds\eta_{nc}=\gamma_{nc}$, we get:
\[
a_{S,h}\,\left(\,(\uvec'_h,p'_h),\,(\vvec_h^\star,q_h^\star)\,\right)\geq\frac{\gamma_{nc}}{2}\nu\,\|\uvec'_h\|_h^2+\nu^{-1}\left(1+\frac{(\gamma_{nc})^{-1}}{2}\,(C_{\div}^{nc})^2\,\right)\| p'_h\|_{L^2(\Om)}^2
\]
Consider now $\gamma_{nc}=(C_{\div}^{nc})^2$. \\
Noticing that $\nu\|\uvec_h\|_h^2+\nu^{-1}\|p'_h\|_{L^2(\Om)}^2\geq\ds\frac{\nu}{2}\|(\uvec'_h,p'_h)\|_{\Xcal_h}^2$, we obtain:
\[
a_{S,h}\,\left(\,(\uvec'_h,p'_h),\,(\vvec_h^\star,q_h^\star)\,\right)\geq \frac{\nu}{2}C_{\min}^{nc}\,\|(\uvec'_h,p'_h)\|_{\Xcal_h}^2,
\]
\end{itemize}
where $C_{\min}^{nc}=\ds\frac{1}{2}\,\min(\,(C_{\div})^2,1\,)$.\\
The operator $T_h$ such that $T_h(\uvec_h',p'_h)=(\vvec^\star_h,p^\star_h)$ is linear and continuous: \[
\|T_h(\uvec_h',p'_h)\|_{\Xcal_h}=\|\vvec_h^\star\|_h+\nu^{-1}\,\|q_h^\star\|_{L^2(\Omega)}\leq C_{\max}^{nc}\,\|(\uvec_h',p_h')\|_{\Xcal_h}^2
\]
where $C_{\max}^{nc}=C_{\div}^{nc}\,(C_{\div}^{nc}+1)$. \footnote{\textcolor{blue}{Note that $(\vvec_h^\star,q_h^\star)=({\bf{0}},0)\Leftrightarrow(\uvec'_h,p'_h)=({\bf{0}},0)$, so that the operator $T_h\in\Lcal(\Xcal_h,\Xcal_h)$ is bijective.}}
The discrete continuous bilinear form $a_{S,h}(\cdot,\cdot)$ is then $T_h$-coercive and according to Theorem \ref{thm:WellPosed}, Problem \eqref{eq:Stokes-FVah} is well posed.  Consider $(\uvec_h,p_h)$ the unique solution of Problem \eqref{eq:Stokes-FVah}. Choosing $\vvec_h=0$, we obtain that $\div_h\uvec_h=0$. Now, choosing $\vvec_h=\uvec_h$ in \eqref{eq:Stokes-FVah} and using Cauchy-Schwarz inequality, we get that:
\begin{equation}\label{eq:uh_bound}
\left\{\begin{array}{rcll}
\|\uvec_h\|_h&\leq& \nu^{-1}\,C_{PS}^{nc}\,\|\fvec\|_{\bL^2(\Om)}&\mbox{if }\fvec\in\bL^2(\Om),\mbox{ using \eqref{eq:cps-constant}}~;\\
\\
\|\uvec_h\|_h&\leq& \nu^{-1}\,C_{\Ical_h}^{nc}\,\|\fvec\|_{\bH^{-1}(\Om)}&\mbox{if }\fvec\not\in\bL^2(\Om),\mbox{ using \eqref{eq:c-constant}}.
\end{array}\right.
\end{equation}
Consider $(\vvec_h,q_h)=(\vvec_{h,p_h},0)$ in \eqref{eq:Stokes-FVah}, where $\vvec_{h,p_h}=\Pi_{nc}\vvec_{p_h}$ is built as $\vvec_{h,p'_h}$ in point $1$, setting $p'_h=p_h$. Suppose that $\fvec\in\bL^2(\Om)$. Using the triangular inequality, Cauchy-Schwarz inequality, Poincaré-Steklov inequality \eqref{eq:cps-constant}, Theorem \ref{thm:WellPosed-nc}, and estimate \eqref{eq:uh_bound}, we have:  
\[
\begin{array}{rcl}
\|p_h\|_{L^2(\Om)}^2&=&\nu\,(\uvec_h,\vvec_{h,p_h})_h-(\fvec,\vvec_{h,p_h})_{\bL^2(\Om)}\,,\\
\\
&\leq&\nu\,\|\uvec_h\|_h\,\|\vvec_{h,p_h}\|_h+\|\fvec\|_{\bL^2(\Om)}\,\|\vvec_{h,p_h}\|_{\bL^2(\Om)}\\
\\
&\leq&2\,C_{PS}^{nc}\,\|\fvec\|_{\bL^2(\Om)}\,\|\vvec_{h,p_h}\|_h\leq 2\,C_{PS}^{nc}\,C_{nc}\,\|\fvec\|_{\bL^2(\Om)}\,\|\Grad\vvec_{p_h}\|_{\bbL^2(\Om)}\,,\\
\\
&\leq&2\,C_{PS}^{nc}\,C_{\div}^{nc}\,\|\fvec\|_{\bL^2(\Om)}\,\|p_h\|_{L^2(\Om)}.
\end{array}
\]
Applying the same reasoning when $\fvec\in\bH^{-1}(\Om)$, we get that:
\begin{equation}\label{eq:ph_bound}
\left\{\begin{array}{rcll}
\|p_h\|_{L^2(\Om)}&\leq&2\,C_{PS}^{nc}\,C^{nc}_{\div}\,\|\fvec\|_{\bL^2(\Om)}&\mbox{if }\fvec\in\bL^2(\Om),\mbox{ using \eqref{eq:cps-constant}}~;\\
\\
\|p_h\|_{L^2(\Om)}&\leq&2\,C_{\Ical_h}^{nc}\,\,C^{nc}_{\div}\|\fvec\|_{\bH^{-1}(\Om)}&\mbox{if }\fvec\not\in\bL^2(\Om),\mbox{ using \eqref{eq:c-constant}}.
\end{array}\right.
\end{equation}
The a priori error estimate corresponds to \cite[Theorem 4]{CrRa73}.
\end{proof}
\begin{rema}
Again, we recover the first Banach–Ne\v{c}as–Babu\v{s}ka condition \cite[Thm. 25.9, (BNB1)]{ErGu21-II}:
\[
a_{S,h}\,\left(\,(\uvec_h',p_h'),\,(\vvec_h^\star,q_h^\star)\,\right)\geq \frac{\nu}{2}\,(C_{\max}^{nc})^{-1}\,\|(\uvec'_h,p'_h)\|_{\Xcal_h}\,\|(\vvec_h^\star,q_h^\star)\|_{\Xcal_h}.
\]
\end{rema}
Let us call $C_{\mathrm{stab}}^{nc}=\frac{\nu}{2}\,C_{\min}^{nc}\,(C_{\max}^{nc})^{-1}$ the stability constant. With the choice of our parameters, $C_{\mathrm{stab}}^{nc}$ is such that:
\[
C_{\mathrm{stab}}^{nc}=\left\{
\begin{array}{rl}
\ds\frac{\nu}{4}\,\frac{C_{\div}^{nc}}{1+C_{\div}^{nc}}&\mbox{if } 0<C_{\div}^{nc}\leq 1,\\
\\
\ds\frac{\nu}{4}\,\frac{(C_{\div}^{nc})^{-1}}{1+C_{\div}^{nc}}&\mbox{if } 1\leq C_{\div}^{nc}.
\end{array}\right.
\]
The main issue with nonconforming mixed finite elements is the construction the basis functions. In a recent paper, Sauter explains such a construction in two dimensions \cite[Corollary 2.4]{Saut22}, and gives a bound to the discrete counterpart $\beta_\Tcal(\Om)$ of $\beta(\Om)$ defined in \eqref{eq:LBB-CIS}:
\begin{equation}\label{eq:LBB-CISh}
\beta_\Tcal(\Om)=\inf_{q_h\in Q_h\backslash \{0\}}\sup_{\vvec_h\in\bX_{0,h}\backslash \{0\}}\frac{(\div_h\vvec_h)}{\|q_h\|_{L^2(\Om)}\,\|\vvec_h\|_h}\geq c_\Tcal\,k^{-\alpha}.
\end{equation}
This bound is in $c_\Tcal\,k^{-\alpha}$, where the parameter $\alpha$ is explicit and depends on $k$ and on the mesh topology; and the constant $c_\Tcal$ depends only on the shape-regularity of the mesh. 
\section{Nonconforming Crouzeix-Raviart mixed finite elements}\label{sec:CrRa}
We study the lowest order nonconforming Crouzeix-Raviart mixed finite elements \cite{CrRa73}. Let us consider $X_{CR}$ (resp. $X_{0,CR}$), the space of nonconforming approximation of $H^1(\Om)$ (resp. $ H^1_0(\Om)$) of order $1$:
\begin{equation}
\label{eq:XCR}
\begin{array}{rcl}
\ds X_{CR}&=&\ds\left\{v_h\in P^1_{disc}(\Tcal_h)\,;\quad
\forall f\in\Ical_F^i,\,\int_{F_f}[v_h]=0\right\}\,;\\
\\
\ds X_{0,CR}&=&\ds\left\{v_h\in X_{CR}\,;\quad\forall f\in\Ical_F^b,\,\ds\int_{F_f} v_h=0\right\}.
\end{array}
\end{equation}
The space of nonconforming approximation of of $\bH^1(\Om)$ (resp. $\bH^1_0(\Om)$) of order $1$ is $\bX_{CR}=(X_{CR})^d$ (resp. $\bX_{0,CR}=(X_{0,CR})^d$). We set $\Xcal_{CR}:=\bX_{0,CR}\times Q_{CR}$ where $Q_{CR}=P^0_{disc}(\Tcal_h)\cap L^2_{zmv}(\Om)$. \\
We can endow $X_{CR}$ with the basis $(\psi_f)_{f\in\Ical_F}$ such that: $\forall \ell\in\Ical_K$,
\[
\psi_{f|K_\ell}=\left\{\begin{array}{cl}1-d\lambda_{i,\ell}&\mbox{if }f\in\Ical_{F,\ell},\\ 0&\mbox{ otherwise,}\end{array}\right.
\]
where $S_i$ is the vertex opposite to $F_f$ in $K_\ell$.  We then have $\psi_{f|F_f}=1$, so that $[\psi_f]_{F_f}=0$ if $f\in\Ical_F^i$ (i.e. $F_f\in\overset{\circ}{\Om}$), and $\forall f'\neq f$, $\int_{F_{f'}}\psi_f=0$. \\
We have: $X_{CR}=\mathrm{vect}\left((\psi_f)_{f\in\Ical_F}\right)$ and $X_{0,CR}=\mathrm{vect}\left((\psi_f)_{f\in\Ical_F^i}\right)$.\\ 
The Crouzeix-Raviart interpolation operator $\pi_{CR}$ for scalar functions is defined by:
\[
\pi_{CR}: \left\{\begin{array}{rcl} H^1(\Om)&\rightarrow&X_{CR}\\ v&\mapsto&\ds\sum_{f\in\Ical_F}\pi_fv\,\psi_{f}\end{array}\right.,\mbox { where }\pi_fv=\frac{1}{|F_f|}\int_{F_f} v.
\]
Notice that $\forall f\in\Ical_F$, $\int_{F_f} \pi_{CR}v=\int_{F_f} v$. Moreover, the Crouzeix-Raviart interpolation operator preserves the constants, so that $\pi_{CR}\ul{v}_\Om=\ul{v}_\Om$ where $\ul{v}_\Om=\int_\Om v/|\Om|$. We recall the following result \cite[Lemma 2]{ApNS01}):
\begin{lemm}\label{lem:piCR}
The Crouzeix-Raviart interpolation operator $\pi_{CR}$ is such that:
\begin{equation}\label{eq:gradvhleqgradv-glo}
\forall v\in H^1(\Om),\quad
\|\pi_{CR}v\|_h\leq \|\grad v\|_{\bL^2(\Om)}.
\end{equation}
\end{lemm}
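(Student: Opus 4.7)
The plan is to prove the inequality element by element, by establishing the classical identity that $\Grad(\pi_{CR}v)$ restricted to any $K_\ell$ is exactly the element average of $\grad v$, and then applying the Cauchy--Schwarz (or Jensen) inequality before summing over $\ell\in\Ical_K$.

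First I would fix $\ell\in\Ical_K$ and exploit the facet-average definition of $\pi_{CR}$. Since $\pi_{CR}v|_{K_\ell}$ is affine, $\grad(\pi_{CR}v)|_{K_\ell}$ is a constant vector $\mathbf{g}_\ell$, and the trace of $\pi_{CR}v$ on each facet $F_f$ has mean $\pi_f v = |F_f|^{-1}\int_{F_f}v$. Using Green's formula together with a constant test vector $\mathbf{c}\in\R^d$, I get
\[
\int_{K_\ell}\grad v\cdot\mathbf{c}\,dx
=\int_{\pa K_\ell} v\,\mathbf{c}\cdot\nvec\,ds
=\sum_{f\in\Ical_{F,\ell}}|F_f|\,\pi_f v\,\mathbf{c}\cdot\nvec_f
=\int_{\pa K_\ell}\pi_{CR}v\,\mathbf{c}\cdot\nvec\,ds
=|K_\ell|\,\mathbf{g}_\ell\cdot\mathbf{c},
\]
where in the middle equality I used that $\int_{F_f}v=\int_{F_f}\pi_{CR}v$ by definition of the Crouzeix--Raviart interpolant. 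Since $\mathbf{c}$ is arbitrary, this yields the key identity
\[
\grad(\pi_{CR}v)|_{K_\ell}=\frac{1}{|K_\ell|}\int_{K_\ell}\grad v\,dx.
\]

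Next I apply Cauchy--Schwarz (equivalently, Jensen's inequality for the convex function $\xvec\mapsto|\xvec|^2$) to the constant vector $\mathbf{g}_\ell$:
\[
\|\grad(\pi_{CR}v)\|_{\bL^2(K_\ell)}^2
=|K_\ell|\,|\mathbf{g}_\ell|^2
\leq|K_\ell|\cdot\frac{1}{|K_\ell|}\int_{K_\ell}|\grad v|^2\,dx
=\|\grad v\|_{\bL^2(K_\ell)}^2.
\]
The vector case is treated component by component. Summing over $\ell\in\Ical_K$ gives
\[
\|\pi_{CR}v\|_h^2
=\sum_{\ell\in\Ical_K}\|\grad(\pi_{CR}v)\|_{\bL^2(K_\ell)}^2
\leq\sum_{\ell\in\Ical_K}\|\grad v\|_{\bL^2(K_\ell)}^2
=\|\grad v\|_{\bL^2(\Om)}^2,
\]
which is the claimed estimate \eqref{eq:gradvhleqgradv-glo}.

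The only real step to get right is the local identity $\grad(\pi_{CR}v)|_{K_\ell}=|K_\ell|^{-1}\int_{K_\ell}\grad v$; everything else is a one-line inequality and a sum. This identity is not a coincidence but rather the hallmark of the Crouzeix--Raviart construction: the matching of facet averages is precisely what makes $\pi_{CR}$ commute with the gradient modulo element averaging, which is why the operator is $\bL^2$-contractive on $\grad$ with constant $1$ (no mesh-dependent factor appears).
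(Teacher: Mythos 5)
Your proof is correct and follows essentially the same route as the paper: both establish the local identity $\grad(\pi_{CR}v)|_{K_\ell}=|K_\ell|^{-1}\int_{K_\ell}\grad v$ by integrating by parts and using the facet-average matching $\int_{F_f}\pi_{CR}v=\int_{F_f}v$, then apply Cauchy--Schwarz on each element and sum over $\ell\in\Ical_K$. No discrepancy to report.
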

\begin{proof}
We have, integrating by parts twice and using Cauchy-Schwarz inequality:
\[
\begin{array}{rcl}
\grad\pi_{CR}v{}_{|K_\ell}&=&
\ds|K_\ell|^{-1}\int_{K_\ell}\grad\pi_{CR}v=|K_\ell|^{-1}\sum_{f\in\Ical_{F,\ell}}\int_{F_f}\pi_{CR}v\,\nvec_f,\\
&=&\ds|K_\ell|^{-1}\sum_{f\in\Ical_{F,\ell}}\int_{F_f}v\,\nvec_f=|K_\ell|^{-1}\int_{K_\ell}\grad v,\\
|\grad\pi_{CR}v{}_{|K_\ell}|&\leq&\ds|K_\ell|^{-1/2}\,\|\grad v\|_{\bL^2(K_\ell)}\\
\Rightarrow\quad\|\grad \pi_{CR}v\|_{\bL^2(K_\ell)}^2&\leq&\ds\|\grad v\|_{\bL^2(K_\ell)}^2.
\end{array}
\]
Summing these local estimates over $\ell\in\Ical_K$, we obtain \eqref{eq:gradvhleqgradv-glo}. 
\end{proof}
For a vector $\vvec\in\bH^1(\Om)$ of components $(v_{d'})_{d'=1}^d$, the Crouzeix-Raviart interpolation operator is such that: $\Pi_{CR}\vvec=\left(\pi_{CR}v_{d'}\right)_{d'=1}^d$. Let us set $\Pi_f\vvec=\left(\pi_fv_{d'}\right)_{d'=1}^d$.
\begin{lemm}\label{lem:PiCR}
The Crouzeix-Raviart interpolation operator $\Pi_{CR}$ can play the role of the Fortin operator:
\begin{align}
\label{eq:WellPosed-CR-1}
\forall\vvec\in\bH^1(\Om)\hspace*{5mm}\|\Pi_{CR}\vvec\|_h&\leq\|\Grad\vvec\|_{\bbL^2(\Om)},\\
\label{eq:WellPosed-CR-2}
\forall\vvec\in\bH^1(\Om)\quad(\div_h\Pi_{CR}\vvec,q_h)&=(\div\vvec,q_h)_{L^2(\Om)},\quad\forall q\in Q_h,
\end{align}
Moreover, for all $\vvec\in\bP^1(\Om)$, $\Pi_{CR}\vvec=\vvec$.
\end{lemm}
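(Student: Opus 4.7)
The plan is to establish the three claims essentially independently; none should be hard, and all three reduce to facts about the scalar operator $\pi_{CR}$ applied componentwise.

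First, the continuity bound \eqref{eq:WellPosed-CR-1} follows by expanding
\[
\|\Pi_{CR}\vvec\|_h^2 = \sum_{d'=1}^d\|\pi_{CR} v_{d'}\|_h^2,\qquad
\|\Grad\vvec\|_{\bbL^2(\Om)}^2 = \sum_{d'=1}^d\|\grad v_{d'}\|_{\bL^2(\Om)}^2,
\]
then applying Lemma \ref{lem:piCR} componentwise and summing over $d'\in\{1,\dots,d\}$. This gives the estimate with the sharp constant $1$ inherited from the scalar bound \eqref{eq:gradvhleqgradv-glo}.

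Next, for the commuting-diagram property \eqref{eq:WellPosed-CR-2}, the key observation is that $q_h\in Q_{CR}$ is piecewise constant, so it suffices to prove $\int_{K_\ell}\div(\Pi_{CR}\vvec) = \int_{K_\ell}\div\vvec$ on each element $K_\ell$. The divergence theorem yields
\[
\int_{K_\ell}\div(\Pi_{CR}\vvec) = \sum_{f\in\Ical_{F,\ell}}\int_{F_f}\Pi_{CR}\vvec\cdot\nvec,
\]
where $\nvec$ is the outward unit normal of $K_\ell$; then the defining property $\int_{F_f}\pi_{CR} v_{d'} = \int_{F_f} v_{d'}$ of the scalar interpolant makes each facet integral equal to $\int_{F_f}\vvec\cdot\nvec$. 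Reapplying the divergence theorem backwards gives the desired elementwise identity; multiplying by the constant value of $q_h$ on $K_\ell$ and summing over $\ell\in\Ical_K$ produces the global equality.

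Finally, preservation of $\bP^1$ reduces componentwise to checking $\pi_{CR}v = v$ for every $v\in P^1(\Om)$. Since the midpoint rule is exact on affine functions, $\pi_f v = v(M_f)$ for each facet barycentre. Thus $v$ and $\pi_{CR}v$ are two elements of $P^1(K_\ell)$ that coincide at the $d+1$ facet barycentres $(M_f)_{f\in\Ical_{F,\ell}}$, which are affinely independent in a simplex; hence $\pi_{CR}v = v$ on each element. No real obstacle is anticipated: the only delicate point is that the commuting-diagram argument relies crucially on the test functions being piecewise constant, which is precisely the pairing for which the Crouzeix--Raviart element was designed.
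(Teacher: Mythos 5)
Your proposal is correct and follows essentially the same route as the paper: \eqref{eq:WellPosed-CR-1} by applying Lemma~\ref{lem:piCR} componentwise, and \eqref{eq:WellPosed-CR-2} by the divergence theorem on each element combined with the facet-average preservation $\int_{F_f}\pi_{CR}v=\int_{F_f}v$ and the fact that $Q_{CR}$ is piecewise constant. You additionally supply an explicit unisolvence argument for the $\bP^1$-preservation claim (which the paper's proof leaves unaddressed); that argument is valid, since the one-point barycentre rule is exact on affine functions and a $P^1$ function on a simplex is determined by its values at the $d+1$ facet barycentres.
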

\begin{proof}
We obtain \eqref{eq:WellPosed-CR-1} applying Lemma \eqref{lem:piCR} component by component. By integrating by parts, we have $\forall\vvec\in\bH^1(\Om)$, $\forall\ell\in\Ical_K$:
\[
\begin{array}{rcl}
\ds\int_{K_\ell}\div\Pi_{CR}\vvec
&=&\ds\sum_{f\in\Ical_{F,\ell}}\int_{F_f}\Pi_{CR}\vvec\cdot\nvec_f
=\sum_{f\in\Ical_{F,\ell}}\int_{F_f}\Pi_f\vvec\cdot\nvec_f,\\
&=&\ds\sum_{f\in\Ical_{F,\ell}}\int_{F_f} \vvec\cdot\nvec_f
=\int_{K_\ell}\div\vvec,
\end{array}
\]
so that \eqref{eq:WellPosed-CR-2} is satisfied.
\end{proof}
We can apply the T-coercivity theory to show the next following result:
\begin{theo}
Let $\Xcal_h=\Xcal_{CR}$. Then the continuous bilinear form $a_{S,h}(\cdot,\cdot)$ is $T_h$-coercive and Problem \eqref{eq:Stokes-FVah} is well-posed.
\end{theo}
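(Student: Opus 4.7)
The plan is to invoke Theorem \ref{thm:WellPosed-nc} directly. Its hypotheses demand the existence of a Fortin operator $\Pi_{nc}:\bH^1(\Om)\to\bX_h$ satisfying the uniform boundedness estimate \eqref{eq:WellPosed-nc-1} and the commuting-diagram divergence identity \eqref{eq:WellPosed-nc-2}. In the Crouzeix-Raviart setting ($k=1$) with $\bX_h=\bX_{CR}$ and $Q_h=Q_{CR}$, the natural candidate is the componentwise Crouzeix-Raviart interpolation operator $\Pi_{CR}:\bH^1(\Om)\to\bX_{CR}$ introduced just before Lemma \ref{lem:PiCR}.

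First I would set $\Pi_{nc}:=\Pi_{CR}$. The stability estimate \eqref{eq:WellPosed-CR-1} in Lemma \ref{lem:PiCR} gives exactly \eqref{eq:WellPosed-nc-1}, and in fact with the optimal explicit constant $C_{nc}=1$, independent of $h$. The divergence-preserving property \eqref{eq:WellPosed-CR-2} from the same lemma gives \eqref{eq:WellPosed-nc-2}: for every $\vvec\in\bH^1(\Om)$ and every piecewise constant $q_h\in Q_{CR}$ one has $(\div_h\Pi_{CR}\vvec,q_h)=(\div\vvec,q_h)_{L^2(\Om)}$, since $q_{h|K_\ell}$ is constant on each cell and the elementwise averages of $\div\Pi_{CR}\vvec$ and $\div\vvec$ coincide.

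Having verified both Fortin hypotheses, I would apply Theorem \ref{thm:WellPosed-nc} to conclude that the bilinear form $a_{S,h}(\cdot,\cdot)$ is $T_h$-coercive on $\Xcal_{CR}\times\Xcal_{CR}$ and that Problem \eqref{eq:Stokes-FVah} admits a unique solution, with the a priori bounds \eqref{eq:stabiNC}. Because $C_{nc}=1$, the discrete divergence constant simplifies to $C_{\div}^{nc}=C_{\div}$, so the stability constant $C_{\mathrm{stab}}^{nc}$ depends only on the Babu\v{s}ka-Brezzi constant $\beta(\Om)$ and the viscosity $\nu$, and is in particular independent of $h$.

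There is essentially no obstacle to overcome at this stage: the genuine work was carried out in Lemma \ref{lem:PiCR}, which relied on the defining property $\int_{F_f}\pi_{CR}v=\int_{F_f}v$ of the Crouzeix-Raviart interpolant together with a single integration by parts on each simplex. The theorem is therefore just the observation that the lowest-order Crouzeix-Raviart pair slots unchanged into the abstract framework of Section \ref{sec:Tcoerh}.
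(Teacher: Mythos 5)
Your proposal is correct and follows exactly the paper's own route: take $\Pi_{nc}:=\Pi_{CR}$, invoke Lemma \ref{lem:PiCR} for the two Fortin properties \eqref{eq:WellPosed-CR-1}--\eqref{eq:WellPosed-CR-2}, and apply Theorem \ref{thm:WellPosed-nc}. Your added observation that $C_{nc}=1$ makes the stability constant mesh-independent is also precisely the remark the paper makes immediately after the theorem.
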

\begin{proof}Using estimates \eqref{eq:WellPosed-CR-1} and  \eqref{eq:cps-constant}, we apply the proof of Theorem \ref{thm:WellPosed-nc}.
\end{proof}
Since the constant of the interpolation operator $\Pi_{CR}$ is equal to $1$, we have $C_{min}^{CR}=C_{min}$ and $C_{max}^{CR}=C_{max}$: the stability constant of the nonconforming Crouzeix-Raviart mixed finite elements is independent of the mesh. This is not the case for higher order (see \cite[Theorem 2.2]{CaSa22}).
\section{Fortin-Soulie mixed finite elements}\label{sec:FoSo}
We consider here the case $d=2$ and we study Fortin-Soulie mixed finite elements \cite{FoSo83}. We consider a shape-regular triangulation sequence $(\Tcal_h)_h$.\\
Let us consider $X_{FS}$ (resp. $X_{0,FS}$), the space of nonconforming approximation of $H^1(\Om)$ (resp. $H^1_0(\Om)$) of order $2$:
\begin{equation}
\label{eq:XFS}
\begin{array}{c}
X_{FS}=\ds\left\{v_h\in P^2_{disc}(\Tcal_h)\,;\quad
\forall f\in\Ical_F^i,\,\forall q_h\in P^1(F_f),\,\int_{F_f}[v_h]\,q_h=0\right\}\,;\\
\\
X_{0,FS}=\ds\left\{v_h\in X_{FS}\,;\quad\forall f\in\Ical_F^b,\,\forall q_h\in P^1(F_f),\,\int_{F_f}v_h\,q_h=0\right\}.
\end{array}
\end{equation}
The space of nonconforming approximation of $\bH^1(\Om)$ (resp. $\bH^1_0(\Om)$) of order $2$ is $\bX_{FS}=(X_{FS})^2$ (resp. $\bX_{0,{FS}}=(X_{0,{FS}})^2$).
We set $\Xcal_{FS}=\bX_{0,FS}\times Q_{FS}$ where $Q_{FS}:=P^1_{disc}(\Tcal_h)\cap L^2_{zmv}(\Om)$.
\\
The building of a basis for $X_{0,FS}$ is more involved  than for $X_{0,CR}$ since we cannot use two points per facet as degrees of freedom. Indeed, for all $\ell\in K_\ell$, there exists a polynomial of order $2$ vanishing on the Gauss-Legendre points of the facets of the boundary $\pa K_\ell$. Let $f\in\Ical_F$. The barycentric coordinates of the two Gauss-Legendre points $(p_{+,f},p_{-,f})$ on $F_f$ are such that: \[
p_{+,f}=(c_+,c_-),\, p_{-,f}=(c_-,c_+),\mbox{ where }c_{\pm}=(1\pm 1/\sqrt{3})/2.
\]
These points can be used to integrate exactly order three polynomials:
\[
\forall g\in P^3(F_f),\,\int_{F_f} g=\frac{|F_f|}{2}\left(g(p_{+,f})+g(p_{-,f})\right).
\]
For all $\ell\in\Ical_K$, we define the quadratic function $\phi_{K_\ell}$ that vanishes on the six Gauss-Legendre points of the facets  of $K_\ell$ (see Fig. \ref{fig:GLpoints}):
\begin{equation}\label{eq:phiKl}
\phi_{K_\ell}:=2-3\sum_{i\in\Ical_{S,\ell}}\lambda_{i,\ell}^2\mbox{ such that}\quad \forall f\in\Ical_{F,\ell},\,\forall q\in P^1(F),\quad\int_{F_f}\phi_{K_\ell}\,q=0.
\end{equation}
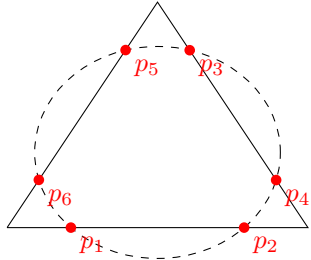
\begin{figure}[h!]
\centering
\begin{tikzpicture}
\draw[name path=line 1] (0,0)--(4,0)--(2,3)--(0,0);
\draw[dashed,name path=line 2] (2,1) ellipse (1.6330 and 1.41);
\fill[red,name intersections={of=line 1 and line 2,total=\t}]
   \foreach \s in {1,...,\t}{(intersection-\s) circle (2pt) node [below right] {$p_{\s}$}};
\end{tikzpicture}
    \caption{The six Gauss-Legendre points of an element $K_\ell$ and the elliptic function $\phi_{K_\ell}$.}
    \label{fig:GLpoints}
\end{figure}
\\
We also define the spaces of $P^2$-Lagrange functions:
\[
\begin{array}{rcl}
X_{LG}&:=&\ds\left\{\textcolor{blue}{v_h\in H^1(\Om)};\quad\forall \ell\in\Ical_K,\,v_{h|K_\ell}\in P^2(K_\ell)\right\},\\
 X_{0,LG}&:=&\ds\left\{v_h\in X_{LG};\quad v_{h|\pa\Om}=0\right\}.
\end{array}
\]
The Proposition below proved in \cite[Prop. 1]{FoSo83} allows to build a basis for $X_{0,FS}$:
\begin{prop}
We have the following decomposition: $X_{FS}=X_{LG}+\Phi_h$ with $\mathrm{dim}(X_{LG}\cap \Phi_h)=1$. Any function of $X_{FS}$ can be written as the sum of a function of $X_{LG}$ and a function of $\Phi_h$. This representation can be made unique by specifying one degree of freedom.
\end{prop}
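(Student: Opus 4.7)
The plan is to establish the decomposition in three steps — verify the inclusions $X_{LG}\subset X_{FS}$ and $\Phi_h\subset X_{FS}$, compute the intersection $X_{LG}\cap\Phi_h$ (which accounts for the single degree of freedom), and then construct a decomposition for any $v_h\in X_{FS}$. The first inclusion is immediate since Lagrange functions are continuous, hence all jumps vanish. For the second, given $v=\sum_{\ell\in\Ical_K}c_\ell\phi_{K_\ell}\in\Phi_h$ (each bubble extended by zero outside $K_\ell$), the jump on an internal facet $F_f=\pa K_L\cap\pa K_R$ reads $c_L\phi_{K_L}|_{F_f}-c_R\phi_{K_R}|_{F_f}$, which is $P^1(F_f)$-orthogonal term by term by the defining property \eqref{eq:phiKl}.

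For the intersection I parametrise $F_f$ by its barycentric coordinate $t\in[0,1]$ (with $\lambda_i=t$, $\lambda_j=1-t$, $\lambda_k=0$) and compute
\[
\phi_{K_\ell}|_{F_f}(t)=2-3\bigl(t^{2}+(1-t)^{2}\bigr)=-1+6\,t(1-t),
\]
which is independent of the side of $F_f$. Continuity of $\sum_\ell c_\ell\phi_{K_\ell}$ across every internal facet then forces $c_L=c_R$; by connectedness of $\Om$ all coefficients coincide, so $X_{LG}\cap\Phi_h=\mathrm{span}\bigl(\sum_{\ell\in\Ical_K}\phi_{K_\ell}\bigr)$ has dimension one.

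For the decomposition, I use that $\phi_{K_\ell}|_{F_f}$ spans the (one-dimensional) $P^1(F_f)$-orthogonal complement in $P^2(F_f)$: hence for any $v_h\in X_{FS}$ and each $f\in\Ical_F^i$ there is a unique $\alpha_f\in\R$ with $[v_h]_{F_f}=\alpha_f\,\phi_{K_L}|_{F_f}$. The requirement $v_h-\sum_\ell c_\ell\phi_{K_\ell}\in X_{LG}$ then reduces to the linear system $c_L-c_R=\alpha_f$ for $f\in\Ical_F^i$, posed on the dual graph of the mesh. Solvability hinges on a cocycle condition around every cycle of this graph; the crucial observation is that for the cyclic fan $K_1,\dots,K_{N_j}$ of elements meeting at an internal vertex $S_j$, the telescoping identity $\sum_i(v_h|_{K_i}(S_j)-v_h|_{K_{i+1}}(S_j))=0$, combined with $\phi_{K_\ell}|_{F_f}(S_j)=-1$ at facet endpoints, forces the signed sum of the $\alpha_{f_i}$'s around the fan to vanish. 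For simply connected $\Om$, vertex cocycles generate the full cycle space of the dual graph, so the system is solvable and its kernel is exactly the one-dimensional $X_{LG}\cap\Phi_h$ identified above.

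The main obstacle is precisely this last solvability step: the compatibility of the $\alpha_f$'s comes for free from the single-valuedness of $v_h$ at interior vertices, but extracting it requires the cycle space of the dual graph to be generated by vertex fans — true in the standard simply connected setting. A dimension check via the Euler relation $|\Ical_S|-|\Ical_F|+|\Ical_K|=1$ together with $3|\Ical_K|=2|\Ical_F^i|+|\Ical_F^b|$ confirms $\dim(X_{LG}+\Phi_h)=2|\Ical_F|=\dim X_{FS}$, consistent with the equality claimed; it also signals that for multiply connected domains one bubble per hole would need to be appended to $\Phi_h$.
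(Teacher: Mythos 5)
Your argument is correct and essentially complete, but it takes a genuinely different route from the paper's. The paper does not actually prove this proposition: it invokes \cite[Prop. 1]{FoSo83}, and the accompanying remark indicates the intended argument is a dimension count --- identify the one-dimensional intersection $X_{LG}\cap\Phi_h=\mathrm{vect}(v_\Phi)$ exactly as you do, then compare $\dim X_{FS}$ with $\dim X_{LG}+\dim\Phi_h-1$ via Euler's formula to conclude that the subspace $X_{LG}+\Phi_h$, which is clearly contained in $X_{FS}$, exhausts it (note the paper's remark writes this relation with a $+1$ where a $-1$ is meant). You instead argue constructively: since the trace of $\phi_{K_\ell}$ on a facet spans the $L^2(F_f)$-orthogonal complement of $P^1(F_f)$ in $P^2(F_f)$, the jumps of $v_h$ determine coefficients $\alpha_f$, and the decomposition reduces to the difference equation $c_L-c_R=\alpha_f$ on the dual graph, whose compatibility you verify by telescoping the point values of $v_h$ around each interior vertex and by the planar-duality fact that vertex fans generate the cycle space of the dual graph. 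Your version buys an explicit construction of the bubble component and makes the topological hypothesis visible: simple connectivity of $\Om$ enters precisely at the cycle-space step, and your closing observation that the decomposition must be amended on multiply connected domains is a genuine point that the dimension-count route hides inside Euler's formula. The price is that your approach is longer and relies on a graph-theoretic lemma, whereas the counting argument is shorter but requires separately establishing $\dim X_{FS}$ (i.e., the independence of the jump constraints), which is essentially equivalent information. Both arguments identify the same one-dimensional kernel, namely the global bubble $v_\Phi=\sum_{\ell\in\Ical_K}\phi_{K_\ell}$, whose facet trace $-1+6t(1-t)$ you compute correctly.
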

\textcolor{blue}{
Notice that $\Phi_h\cap X_{LG}=\mathrm{vect}{(v_\Phi)}$, where for all $\ell\in\Ical_K$, $v_{\Phi|K_\ell}=\phi_{K_\ell}$. Then, counting the degrees of freedom, one can show that $\mathrm{dim}(X_{FS})=\mathrm{dim}(X_{LG})+\mathrm{dim}(\Phi_h)+1$. For problems involving Dirichlet boundary conditions we can prove thus that for $X_{0,FS}$ the representation is unique and $X_{0,FS}=X_{0,LG}\oplus\Phi_h$.} We have $X_{LG}=\mathrm{vect}\left((\phi_{S_i})_{i\in\Ical_S},(\phi_{F_f})_{f\in\Ical_F}\right)$ where the basis functions are such that: $\forall \ell\in\Ical_K$,
\begin{equation}\label{eq:phiSphiF}
\begin{array}{rcl}
\forall i\in\Ical_S,\quad\phi_{S_i|K_\ell}&=&\left\{\begin{array}{rl}
\lambda_{i,\ell}\,(2\lambda_{j,\ell}-1)&\quad\mbox{ if }i\in\Ical_{S,\ell}\\
0&\quad\mbox{ if }j\not\in\Ical_{S,\ell}
\end{array}\right.\\
\\
\forall f\in\Ical_F,\quad\phi_{F_f|K_\ell}&=&\left\{\begin{array}{rl}
4\,\lambda_{i,\ell}\,\lambda_{j,\ell}&\quad\mbox{ if }f\in\Ical_{F,\ell},\mbox{ and }F_f=S_iS_j\\
0&\quad\mbox{ if }f\not\in\Ical_{F,\ell}
\end{array}\right.
\end{array}.
\end{equation}
For all $\ell\in\Ical_K$, we will denote by $(\phi_{\ell,j})_{j=1}^6$ the local nodal basis such that: \[(\phi_{\ell,j})_{j=1}^3=(\phi_{S_i|K_\ell})_{i\in\Ical_{S,\ell}}\quad\mbox{and}\quad(\phi_{\ell,j})_{j=4}^6=(\phi_{F_f|K_\ell})_{f\in\Ical_{F,\ell}}.
\]
The spaces $X_{FS}$ and $X_{0,FS}$ are such that:
\begin{equation}\label{eq:XFSbasis}
\begin{array}{rcl}
X_{FS}&=&\ds\mathrm{vect}\left(\,(\phi_{S_i})_{i\in\Ical_S},(\phi_{F_f})_{f\in\Ical_F},(\phi_{K_\ell})_{\ell\in\Ical_K}\,\right)\,,\\
\\
X_{0,FS}&=&\ds\mathrm{vect}\left(\,(\phi_{S_i})_{i\in\Ical_S^i},(\phi_{F_f})_{f\in\Ical_F^i},(\phi_{K_\ell})_{\ell\in\Ical_K}\,\right).
\end{array}
\end{equation}
We propose here an alternative definition of the Fortin interpolation operator proposed in \cite{FoSo83}. Let us first recall the Scott-Zhang interpolation operator \cite{ScZh90,Ciar13}. For all $i\in\Ical_S$, we choose some $\ell_i\in\Ical_{K,i}$, and we build the $L^2(K_{\ell_i})$-dual basis $(\tilde{\phi}_{\ell_i,j})_{j=1}^6$ of the local nodal basis such that:
\[
\forall j,\,j'\in\{1,\cdots,6\},\quad\int_{K_{\ell_i}}\tilde{\phi}_{\ell_i,j}\,\phi_{\ell_i,j'}=\delta_{j,j'}. 
\]
Let us define the Fortin-Soulie interpolation operator for scalar functions by:
\begin{equation}
 \label{eq:piFS}
 \begin{array}{c}
\ds\pi_{FS}: \left\{\begin{array}{rcl} \Pcal_hH^1&\rightarrow&X_{FS}\\ v&\mapsto&\ds\tilde{\pi}v+\sum_{\ell\in\Ical_K}v_{K_\ell}\phi_{K_\ell}
\end{array}\right.,
\\
\ds\mbox{with}\quad\tilde{\pi}v=\sum_{i\in\Ical_S}v_{S_i}\phi_{S_i}+\sum_{f\in\Ical_F}\tilde{v}_{F_f}\phi_{F_f}.
 \end{array}
\end{equation}
\begin{itemize}
\item The coefficients $\left(v_{S_i}\right)_{i\in\Ical_S}$ are fixed so that: $\forall i\in\Ical_S$, $\ds v_{S_i}=\int_{K_{\ell,i}}v\,\tilde{\phi}_{\ell_i,j_i}$, where $j_i$ is the index such that $\ds\int_{K_{\ell_i}}\tilde{\phi}_{\ell_i,j_i}\,\phi_{S_i|K_{\ell_i}}=1$. Using Cauchy-Schwarz inequality and inequality \eqref{eq:vKvF}, we have:
\begin{equation}
\label{eq:estim1}
|v_{S_i}|\leq\left(\int_{K_{\ell_i}}\tilde{\phi}_{\ell_i,j_i}^2\right)^{1/2}\,\|v\|_{L^2(K_{\ell_i})}\lesssim|K_{\ell_i}|^{-1/2}\,\|v\|_{L^2(K_{\ell_i})}\lesssim\|\hat{v}_{\ell_i}\|_{L^2(\hat{K})}.
\end{equation}
\item The coefficients $\left(\tilde{v}_{F_f}\right)_{f\in\Ical_F}$ are fixed so that: $\forall f\in\Ical_F$, $\ds\int_{F_f}\tilde{\pi}\tilde{v}=\int_{F_f}\{v\}$. 
\end{itemize}
We then have:
\begin{equation}\label{eq:vF}
\tilde{v}_{F_f}=\frac{3}{2}\,v_{F_f}-\frac{1}{4}\sum_{i\in\Ical_{S,f}}v_{S_i},\mbox{ where }v_{F_f}:=\frac{1}{|F_f|}\int_{F_f}\{v\}
\end{equation}
For all $\ell\in\Ical_K$, the coefficient $v_{K_\ell}$ is fixed so that: $\ds\int_{K_\ell}\pi_{FS}v=\int_{K_\ell}v$.\\ 
The definition \eqref{eq:piFS} is more general than the one given in \cite{FoSo83}, which holds for $v\in H^2(\Om)$. \\
We set $\vvec_{S_i}:=\left(\,\tilde{\pi}v_1(S_i),\tilde{\pi}v_2(S_i)\right)^T$ and  $\tilde{\vvec}_{F_f}:=\left(\,\tilde{\pi}v_1(F_f),\tilde{\pi}v_2(F_f)\,\right)^T$.\\
We now define the Fortin-Soulie interpolation operator for vector functions by:
\[
\Pi_{FS}: \left\{\begin{array}{rcl} \bH^1(\Om)&\rightarrow&\bX_{FS}\\ 
\vvec&\mapsto&\ds\sum_{i\in\Ical_S}\vvec_{S_i}\phi_{S_i}+\sum_{f\in\Ical_F}\tilde{\vvec}_{F_f}\phi_{F_f}+\sum_{\ell\in\Ical_K}\vvec_{K_\ell}\phi_{K_\ell}.
\end{array}\right.
\]
For all $\ell\in\Ical_K$, the vector coefficient $\vvec_{K_\ell}\in\R^2$ is now fixed so that \textcolor{blue}{condition \eqref{eq:WellPosed-nc-2} is satisfied}. We can impose for example that the projection $\Pi_{FS}\vvec$ satisfies:
\begin{equation}
\label{eq:linsyst_vKl}
\int_{K_\ell} T_\ell^{-1}(\xvec)\,\div\Pi_{FS}\vvec=\int_{K_\ell} T_\ell^{-1}(\xvec)\,\div\vvec.
\end{equation}
\textcolor{blue}{Notice that due to \eqref{eq:phiKl}, the patch-test condition is still satisfied.} Moreover, one can check that for all $\vvec\in\bP^2(\Omega)$, $\Pi_{FS}\vvec=\vvec$.  In particular, if $\vvec\in\bP^1(\Om)$, we obtain that for all $\ell\in\Ical_K$, $\vvec_{K_\ell}=0$. Using definitions \eqref{eq:phiSphiF} and \eqref{eq:vF}, we obtain for all $\ell\in\Ical_K$:
\begin{equation}\label{eq:PiFS(v)}
\quad(\Pi_{FS}\vvec)_{|K_\ell}=\sum_{i\in\Ical_{S,\ell}}\vvec_{S_i}\,\psi_{S_i}+\frac{3}{2}\sum_{f\in\Ical_{F,\ell}}\vvec_{F_f}\phi_{F_f}+\vvec_{K_\ell}\phi_{K_\ell},
\end{equation}
where $\psi_{S_i|K_\ell}=3\,\lambda_{i,\ell}^2-2\,\lambda_{i,\ell}$.\\
Let us estimate $\vvec_{K_\ell}$. By changing the variable, setting $\hat{\vvec}_\ell(\hat{\xvec})=\vvec\circ T_\ell(\hat{\xvec})$, the linear system \eqref{eq:linsyst_vKl} is written as follows, for $d'\in\{1,2\}$:
\[
\begin{array}{rcl}
\ds({\bbB_\ell}^{-1}\,\vvec_{K_\ell})\cdot\int_{\hatK} \hat{x}_{d'}\,\grad_{\hat{\xvec}}\hat{\phi}_{K_\ell}&=&\ds\int_{\hatK}\hat{x}_{d'}\,\div_{\hat{\xvec}}({\bbB_\ell}^{-1}\hat{\vvec}_\ell)\\
&&-\ds\sum_{i\in \Ical_{S,\ell}}({\bbB_\ell}^{-1}\,\vvec_{S_i})\cdot\int_{\hatK}\hat{x}_{d'}\,\grad_{\hat{\xvec}}\hat{\psi}_{S_i}\\
&&\ds-\frac{3}{2}\sum_{f\in\Ical_{F,\ell}}({\bbB_\ell}^{-1}\,\vvec_{F_f})\cdot\int_{\hatK}\hat{x}_{d'}\,\grad_{\hat{\xvec}}\hat{\phi}_{F_f}.
\end{array}
\]
Noticing that $\int_{\hatK} \hat{x}_{d'}\,\grad_{\hat{\xvec}}\hat{\phi}_{K_\ell}=-\frac{1}{4}\evec_{d'}$, we have:
\begin{equation}\label{eq:vKl}
\begin{array}{rcl}
\ds\frac{1}{4}{\bbB_\ell}^{-1}\,\vvec_{K_\ell}&=&\ds\sum_{i\in \Ical_{S,\ell}}\int_{\hatK}\hat{\xvec}\,({\bbB_\ell}^{-1}\,\vvec_{S_i})\cdot\grad_{\hat{\xvec}}\hat{\psi}_{S_i}\\
\\
&&\ds+\frac {3}{2}\sum_{f\in\Ical_{F,\ell}}\int_{\hatK}\hat{\xvec}\,({\bbB_\ell}^{-1}\,\vvec_{F_f})\cdot\grad_{\hat{\xvec}}\hat{\phi}_{F_f}\\
\\
&&\ds-\int_{\hatK}\hat{\xvec}\,\div_{\hat{\xvec}}({\bbB_\ell}^{-1}\hat{\vvec}_\ell).
\end{array}
\end{equation}
Using integration formula \eqref{eq:coorbary}, and Cauchy-Schwarz inequality to bound the last term of \eqref{eq:vKl}, we have:
\begin{equation}
\label{eq:estim3}
|\vvec_{K_\ell}|^2\lesssim\sigma^2\,
\left(\sum_{i\in\Ical_{S,\ell}}|\vvec_{S_i}|^2+\sum_{f\in\Ical_{F,\ell}}|\vvec_{F_f}|^2+\norm{\Grad_{\hat{\xvec}}\hat{\vvec}_\ell}_{\bbL^2(\hatK)}^2\right).
\end{equation}
\begin{prop}\label{pro:rh-FS}
The Fortin-Soulie interpolation operator $\Pi_{FS}$ is such that:
\begin{equation}\label{eq:rh-FS}
\exists C_{FS}>0,\,\forall\vvec\in\bH^1(\Om),\quad\|\Pi_{FS}\vvec\|_h\leq C_{FS}\|\Grad\vvec\|_{\bbL^2(\Om)}.
\end{equation}
\end{prop}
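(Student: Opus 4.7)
The plan is to estimate $\|\Grad\Pi_{FS}\vvec\|_{\bbL^2(K_\ell)}$ element by element, transfer to the reference simplex where the basis functions are fixed, exploit that $\Pi_{FS}$ preserves constants to reduce everything to gradient norms of $\vvec$ on a macro-patch of $K_\ell$, and finally sum using the finite overlap of patches.

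\textbf{Step 1: reduction to the reference element.} Fix $\ell\in\Ical_K$. Using the local representation \eqref{eq:PiFS(v)}, we have on $\hat{K}$
\[
\widehat{(\Pi_{FS}\vvec)}_\ell=\sum_{i\in\Ical_{S,\ell}}\vvec_{S_i}\,\hat{\psi}_{S_i}+\tfrac{3}{2}\sum_{f\in\Ical_{F,\ell}}\vvec_{F_f}\,\hat{\phi}_{F_f}+\vvec_{K_\ell}\,\hat{\phi}_{K_\ell}.
\]
Since $\|\Grad_{\hat{\xvec}}\hat{\psi}_{S_i}\|_{\bbL^2(\hatK)}$, $\|\Grad_{\hat{\xvec}}\hat{\phi}_{F_f}\|_{\bbL^2(\hatK)}$ and $\|\Grad_{\hat{\xvec}}\hat{\phi}_{K_\ell}\|_{\bbL^2(\hatK)}$ are absolute constants, I get
\[
\|\Grad_{\hat{\xvec}}\widehat{(\Pi_{FS}\vvec)}_\ell\|_{\bbL^2(\hatK)}^{2}\lesssim\sum_{i\in\Ical_{S,\ell}}|\vvec_{S_i}|^{2}+\sum_{f\in\Ical_{F,\ell}}|\vvec_{F_f}|^{2}+|\vvec_{K_\ell}|^{2},
\]
and the scaling \eqref{eq:gradvK} (in 2D) then produces the same bound for $\|\Grad\Pi_{FS}\vvec\|_{\bbL^2(K_\ell)}^2$ up to a $\sigma^2$ factor.

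\textbf{Step 2: subtracting the local mean.} The operator $\Pi_{FS}$ preserves constants, so for the constant $\bar{\vvec}_\ell:=\fint_{K_\ell}\vvec$ we have $\Grad\Pi_{FS}\vvec=\Grad\Pi_{FS}(\vvec-\bar{\vvec}_\ell)$. Applying the bound of Step~1 to $\vvec-\bar{\vvec}_\ell$, and using that the linear system \eqref{eq:linsyst_vKl} is invariant under adding constants (since $\div$ kills them), I get
\[
\|\Grad\Pi_{FS}\vvec\|_{\bbL^2(K_\ell)}^{2}\lesssim\sigma^{2}\Bigl(\sum_{i\in\Ical_{S,\ell}}|\vvec_{S_i}-\bar{\vvec}_\ell|^{2}+\sum_{f\in\Ical_{F,\ell}}|\vvec_{F_f}-\bar{\vvec}_\ell|^{2}+|\vvec_{K_\ell}|^{2}\Bigr).
\]

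\textbf{Step 3: controlling each coefficient.} For the vertex term, \eqref{eq:estim1} applied to $\vvec-\bar{\vvec}_{\ell_i}$ combined with the reference Poincaré--Steklov inequality gives $|\vvec_{S_i}-\bar{\vvec}_{\ell_i}|\lesssim\sigma\,\|\Grad\vvec\|_{\bbL^2(K_{\ell_i})}$; then $|\bar{\vvec}_{\ell_i}-\bar{\vvec}_\ell|$ is handled by chaining through the face-connected elements of the vertex patch $\omega_{S_i}$, each hop being controlled by the multiplicative trace inequality \eqref{eq:MTI-PSinC} applied on both sides of the shared facet, noting crucially that $\vvec\in\bH^1(\Om)$ has vanishing jumps. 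For the facet term, write
\[
\vvec_{F_f}-\bar{\vvec}_\ell=\tfrac12\fint_{F_f}(\vvec|_{K_\ell}-\bar{\vvec}_\ell)+\tfrac12\fint_{F_f}(\vvec|_{K_{R}}-\bar{\vvec}_{K_R})+\tfrac12(\bar{\vvec}_{K_R}-\bar{\vvec}_\ell),
\]
bound the first two averages by $\sigma\,\|\Grad\vvec\|_{\bbL^2(K_\ell\cup K_R)}$ via \eqref{eq:MTI-PSinC}, and bound the third using the trace argument just described. For the bubble coefficient, re-apply \eqref{eq:estim3} to $\vvec-\bar{\vvec}_\ell$, yielding
\[
|\vvec_{K_\ell}|^{2}\lesssim\sigma^{2}\Bigl(\sum_{i\in\Ical_{S,\ell}}|\vvec_{S_i}-\bar{\vvec}_\ell|^{2}+\sum_{f\in\Ical_{F,\ell}}|\vvec_{F_f}-\bar{\vvec}_\ell|^{2}+\sigma^{2}\|\Grad\vvec\|_{\bbL^2(K_\ell)}^{2}\Bigr),
\]
which feeds into the bounds already obtained.

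\textbf{Step 4: assembly.} Collecting the estimates shows that $\|\Grad\Pi_{FS}\vvec\|_{\bbL^2(K_\ell)}^{2}\lesssim C(\sigma)\,\|\Grad\vvec\|_{\bbL^2(\omega_\ell)}^{2}$, where $\omega_\ell$ is the macro-patch formed by $K_\ell$ together with all elements sharing a vertex with $K_\ell$. Summing over $\ell\in\Ical_K$ and using that the multiplicity $\max_{\ell'}\mathrm{card}\{\ell:K_{\ell'}\subset\omega_\ell\}$ is bounded by a constant depending only on the shape-regularity $\sigma$ delivers \eqref{eq:rh-FS} with $C_{FS}$ depending only on $\sigma$. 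The main obstacle is Step~3: the non-locality of $\Pi_{FS}$ through the arbitrary choice of $\ell_i$ at each vertex forces the vertex-patch chaining argument, and one must be careful that all the trace and Poincaré constants that appear ultimately only involve $\sigma$ (independently of $h$).
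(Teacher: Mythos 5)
Your proof is correct and follows the same overall strategy as the paper's: pass to the reference element where the basis gradients are absolute constants, bound the vertex, facet and bubble coefficients via the Poincar\'e--Steklov inequality in cells \eqref{eq:PSinC}, the multiplicative trace inequality \eqref{eq:MTI-PSinC} and estimate \eqref{eq:estim3}, then sum using the finite overlap $N_\theta\lesssim\sigma^2$ of the vertex patches, arriving at a constant $C_{FS}$ depending only on $\sigma$. The one genuine difference is the recentering step. The paper subtracts the local mean \emph{elementwise}, i.e.\ it works with the broken function $\ul{\vvec}$ defined by $\ul{\vvec}_{|K_\ell}=\vvec-\int_{K_\ell}\vvec/|K_\ell|$, and bounds the coefficients $\ul{\vvec}_{S_i}$, $\ul{\vvec}_{F_f}$ of that broken function directly, one element at a time, so that no comparison of means across neighbouring elements is ever needed. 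You instead subtract, for each target element $K_\ell$, the single constant $\int_{K_\ell}\vvec/|K_\ell|$ from $\vvec$ globally; this is immediately justified by constant preservation of $\Pi_{FS}$, but it forces the extra chaining of $|\bar{\vvec}_{\ell_i}-\bar{\vvec}_\ell|$ and $|\bar{\vvec}_{K_R}-\bar{\vvec}_\ell|$ through the face-connected vertex patch (standard, and unproblematic in $d=2$ with at most $N_\theta$ hops, each controlled by \eqref{eq:MTI-PSinC} on both sides of a shared facet). In exchange, your route avoids the paper's tacit identity $\Grad\Pi_{FS}\vvec_{|K_\ell}=\Grad\Pi_{FS}\ul{\vvec}_{|K_\ell}$ for the \emph{piecewise} recentered $\ul{\vvec}$, which, since $\Pi_{FS}$ is nonlocal (through the choice of $\ell_i$ at each vertex and the two-sided facet averages), is not a direct consequence of constant preservation and would deserve its own justification. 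Both arguments are sound and give the same conclusion.
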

\begin{proof}
Let $\vvec\in\bH^1(\Om)$. Let us set $\ul{\vvec}\in\Pcal_h\bH^1$ such that $\forall\ell\in\Ical_K$, $\ul{\vvec}_{|K_\ell}:=\vvec-\int_{K_\ell}\vvec/|K_\ell|$. We have:
\[
\norm{\Pi_{FS}\vvec}_h^2=\sum_{\ell\in\Ical_K}\,\norm{\Grad\Pi_{FS}\vvec}_{\bbL^2(K_\ell)}^2=\sum_{\ell\in\Ical_K}\,\norm{\Grad\Pi_{FS}\ul{\vvec}}_{\bbL^2(K_\ell)}^2.
\]
For $\avec$, $\bvec\in\R^2$, we set: $\avec\otimes\bvec:=(a_i\,b_j)_{i,j=1}^2\in\R^{2\times 2}$. According to equation \eqref{eq:PiFS(v)}, we have: 
\[
\Grad\Pi_{FS}\ul{\vvec}=\sum_{i\in\Ical_{S,\ell}}\ul{\vvec}_{S_i}\otimes\grad\psi_{S_i}+\frac{3}{2}\sum_{f\in\Ical_{F,\ell}}\ul{\vvec}_{F_f}\otimes\grad\phi_{F_f}+\ul{\vvec}_{K_\ell}\otimes\grad\phi_{K_\ell}.
\]
We can then make the estimate:
\[
\begin{array}{rcl}
\|\Grad\Pi_{FS}\ul{\vvec}\|_{\bbL^2(K_\ell)}^2&\lesssim&\ds \sum_{i\in\Ical_{S,\ell}}|\ul{\vvec}_{S_i}|^2\,
\|\grad\psi_{S_i}\|_{\bbL^2(K_\ell)}^2\\
\\
&&\ds+\sum_{f\in\Ical_{F,\ell}}|\tilde{\ul{\vvec}}_{F_f}|^2\,
\|\grad\phi_{F_f}\|_{\bbL^2(K_\ell)}^2\\
\\
&&\ds+\,|\ul{\vvec}_{K_\ell}|^2\,\|\grad\phi_{K_\ell}\|_{\bbL^2(K_\ell)}^2,\\
&\lesssim&\ds\norm{{\bbB_\ell}^{-1}}^2\,|J_\ell|\,\left(\,\sum_{i\in\Ical_{S,\ell}}|\ul{\vvec}_{S_i}|^2+\sum_{f\in\Ical_{F,\ell}}|\ul{\vvec}_{F_f}|^2+|\ul{\vvec}_{K_\ell}|^2\right).
\end{array}
\]
Thus, using estimate \eqref{eq:estim3}, and noticing that $\norm{{\bbB_\ell}^{-1}}^2\,|J_\ell|\lesssim\sigma^2$, we have:
\begin{equation}\label{eq:estim31}
\begin{array}{l}
\|\Grad\Pi_{FS}\ul{\vvec}\|_{\bbL^2(K_\ell)}^2\\
\quad\ds\lesssim\sigma^4\,
\left(\,\sum_{i\in\Ical_{S,\ell}}|\ul{\vvec}_{S_i}|^2+\sum_{f\in\Ical_{F,\ell}}|\ul{\vvec}_{F_f}|^2+\|\Grad_{\hat{\xvec}}\hat{\vvec}_\ell\|_{\bbL^2(\hatK)}^2\right).
\end{array}
\end{equation}
Using inequality \eqref{eq:estim1} and Poincaré-Steklov inequality \eqref{eq:PSinC} in cell, component by component, we have:
\begin{equation}\label{eq:estim12}
|\ul{\vvec}_{S_i}|^2\lesssim \|\Grad_{\hat{\xvec}}\hat{\vvec}_{\ell_i}\|_{\bbL^2(\hatK)}^2.
\end{equation}
Since the triangulation $\Tcal_h$ is suppose to be shape-regular, there exists  a constant $N_\theta\lesssim\sigma^2$ such that for all $i\in \Ical_{K,i}$, $N_i\leq N_\theta$ \cite[Rmk. 11.5]{ErGu21-I}. We then obtain that:
\[
\sum_{\ell\in\Ical_K}\sum_{i\in\Ical_{S,\ell}}\,\|\Grad_{\hat{\xvec}}\hat{\vvec}_{\ell_i}\|_{\bbL^2(\hatK)}^2\lesssim\,N_\theta\,\sum_{\ell\in\Ical_K}\|\Grad_{\hat{\xvec}}\hat{\vvec}_{\ell}\|_{\bbL^2(\hatK)}^2.
\]
Thus, summing \eqref{eq:estim12}, we get:
\begin{equation}\label{eq:estim13}
\sum_{\ell\in\Ical_K}\,\sum_{i\in\Ical_{S,\ell}}\,|\ul{\vvec}_{S_i}|^2\lesssim\,N_\theta\,\sum_{\ell\in\Ical_K}\,\|\Grad_{\hat{\xvec}}\hat{\vvec}_\ell\|_{\bbL^2(\hatK)}^2.
\end{equation} 
Using the triangular inequality, equality \eqref{eq:vKvF}, and inequality \eqref{eq:MTI-PSinC-ref}, we obtain:
\[
|\ul{\vvec}_{F_f}|^2\lesssim|F_f|^{-1}\,\sum_{\ell'\in\Ical_{K,f}}\|\ul{\vvec}_{\ell'|F_f}\|_{\bL^2(F_f)}^2\,\lesssim\sum_{\ell'\in\Ical_{K,f}}\|\Grad_{\hat{\xvec}}\hat{\vvec}_{\ell'}\|_{\bbL^2(\hatK)}^2.
\]
Notice that: $\ds\sum_{\ell\in\Ical_K}\,\sum_{f\in\Ical_{F,\ell}}\sum_{\ell'\in\Ical_{K,f}}\|\Grad_{\hat{\xvec}}\hat{\vvec}_{\ell'}\|_{\bbL^2(\hatK)}^2\,\leq\,6\,\sum_{\ell\in\Ical_K}\|\Grad_{\hat{\xvec}}\hat{\vvec}_\ell\|_{\bbL^2(\hatK)}^2$, so that:
\begin{equation}\label{eq:estim22}
\sum_{\ell\in\Ical_K}\,\sum_{f\in\Ical_{F,\ell}}|F_f|^{-1}\,\|\ul{\vvec}_{\ell'|F_f}\|_{\bL^2(F_f)}^2\lesssim\sum_{\ell\in\Ical_K}\|\Grad_{\hat{\xvec}}\hat{\vvec}_\ell\|_{\bbL^2(\hatK)}^2.
\end{equation}
Summing \eqref{eq:estim31} over $\ell\in\Ical_K$, using \eqref{eq:estim13} and \eqref{eq:estim22}, we get that:
\begin{equation}\label{eq:estim32}
\|\Pi_{FS}\vvec\|_h^2
\lesssim
\sigma^2\,N_\theta\,\sum_{\ell\in\Ical_K}\,\|\Grad_{\hat{\xvec}}\hat{\vvec}_\ell\|_{\bbL^2(\hatK)}^2,
\end{equation}
Considering \eqref{eq:gradvK} for each component, and noticing that $\norm{{\bbB_\ell}}^2\,|J_\ell|^{-1}\lesssim\sigma^2$, we obtain that:
\begin{equation}\label{eq:estim33}
\|\Pi_{FS}\vvec\|_h^2
\lesssim\sigma^4\,N_\theta\,\sum_{\ell\in\Ical_K}\,\norm{\Grad\vvec_\ell}_{\bbL^2(K_\ell)}^2.
\end{equation}
We obtain then \eqref{eq:rh-FS} with $\ds C_{FS}\approx\sigma^2\,(N_\theta)^{1/2}$. 
\end{proof}
We recall that the discrete Poincaré–Steklov inequality \eqref{eq:cps-constant} holds.
\begin{theo}
Let $\Xcal_h=\Xcal_{FS}$. Then the continuous bilinear form $a_{S,h}(\cdot,\cdot)$ is $T_h$-coercive and Problem \eqref{eq:Stokes-FVah} is well-posed.
\end{theo}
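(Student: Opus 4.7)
The plan is to verify that the Fortin--Soulie interpolation operator $\Pi_{FS}$ satisfies both hypotheses \eqref{eq:WellPosed-nc-1} and \eqref{eq:WellPosed-nc-2} of Theorem \ref{thm:WellPosed-nc}, after which the $T_h$-coercivity of $a_{S,h}(\cdot,\cdot)$ on $\Xcal_{FS}\times\Xcal_{FS}$ and the well-posedness of Problem \eqref{eq:Stokes-FVah} follow from the exact same three-step construction used in the Crouzeix--Raviart case, with the stability estimates \eqref{eq:stabiNC} holding with $C_{\div}^{nc}=C_{\div}\,C_{FS}$.

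The continuity estimate \eqref{eq:WellPosed-nc-1} with constant $C_{nc}=C_{FS}$ is precisely Proposition \ref{pro:rh-FS}, so nothing new is needed there. For the commutation property \eqref{eq:WellPosed-nc-2}, I would argue elementwise: fix $\vvec\in\bH^1(\Om)$, and show that for every $\ell\in\Ical_K$ and every $q_h\in P^1(K_\ell)$, $\int_{K_\ell}q_h\,(\div\Pi_{FS}\vvec-\div\vvec)=0$. Since $P^1(K_\ell)$ is spanned by the constant $1$ together with the two components of the affine map $T_\ell^{-1}(\xvec)$, two sub-cases suffice. The affine sub-case is exactly \eqref{eq:linsyst_vKl}, which is by construction the identity that fixes the free coefficient $\vvec_{K_\ell}\in\R^2$ in the definition of $\Pi_{FS}$.

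The only genuine work, and the main (mild) obstacle, is the constant sub-case: after integrating by parts it reduces to showing $\sum_{f\in\Ical_{F,\ell}}\int_{F_f}(\Pi_{FS}\vvec-\vvec)\cdot\nvec_f=0$, which I would split according to the decomposition \eqref{eq:PiFS(v)}. On each facet, the bubble contribution is $(\vvec_{K_\ell}\cdot\nvec_f)\int_{F_f}\phi_{K_\ell}$, which vanishes thanks to the orthogonality relation \eqref{eq:phiKl} applied with $q=1$. The remaining nodal-plus-facet contribution satisfies $\int_{F_f}(\Pi_{FS}\vvec-\vvec_{K_\ell}\phi_{K_\ell})=\int_{F_f}\{\vvec\}$ componentwise, by the construction of $\tilde{\vvec}_{F_f}$ in \eqref{eq:vF}, and $\int_{F_f}\{\vvec\}\cdot\nvec_f=\int_{F_f}\vvec\cdot\nvec_f$ by the continuity of the trace of $\vvec\in\bH^1(\Om)$ across $F_f$. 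Once these two ingredients are combined, both hypotheses of Theorem \ref{thm:WellPosed-nc} are in place, the theorem applies with $\Pi_{nc}=\Pi_{FS}$, and the conclusion follows directly.
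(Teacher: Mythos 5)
Your proposal is correct and follows the same route as the paper: establish the two Fortin conditions \eqref{eq:WellPosed-nc-1} and \eqref{eq:WellPosed-nc-2} for $\Pi_{FS}$ (the first being Proposition \ref{pro:rh-FS}, the second coming from the choice of the bubble coefficients $\vvec_{K_\ell}$) and then invoke Theorem \ref{thm:WellPosed-nc} with $\Pi_{nc}=\Pi_{FS}$ and $C_{\div}^{nc}=C_{\div}\,C_{FS}$. Your only addition is to spell out the constant-moment sub-case of \eqref{eq:WellPosed-nc-2} --- via $\int_{F_f}\phi_{K_\ell}=0$ from \eqref{eq:phiKl} and the facet-average condition defining $\tilde{\vvec}_{F_f}$ --- which the paper asserts holds by construction; this is a correct and worthwhile elaboration, not a different argument.
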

\begin{proof}
Apply the proof of Theorem \ref{thm:WellPosed-nc}.
\end{proof}
Notice that in the recent paper \cite{SaTo22}, the inf-sup condition of the mixed Fortin-Soulie finite element is proven directly on a triangle and then using the macro-element technique \cite{Sten90}, but it seems difficult to use this technique to build a Fortin operator, which is needed to compute error estimates.\\
The study can be extended to higher orders for $d=2$ using the following papers: \cite{BaSt07} for $k\geq 4$, $k$ even, \cite{CaSa21} for $k=3$ and \cite{CaSa22}  for $k\geq 5$, $k$ odd. In \cite{DiST22}, the authors propose a local Fortin operator for the lowest order Taylor-Hood finite element \cite{TaHo73} for $d=3$ which could be used to prove the T-coercivity.
\section{Numerical results}\label{sec:resu}
Consider Problem \eqref{eq:Stokes} with data $\fvec=-\grad\phi$, where $\phi\in H^1(\Om)\cap L^2_{zmv}(\Om)$. The unique solution is then $(\uvec,p):=(0,\phi)$. By integrating by parts, the source term in \eqref{eq:Stokes-FV} reads: 
\begin{equation}
\label{eq:contIPP}
\forall\vvec\in\bH^1_0(\Om),\quad\int_\Om\fvec\cdot\vvec=\int_\Om\phi\,\div\vvec.
\end{equation}
Recall that the nonconforming space $\bX_h$ defined in \eqref{eq:Xh} is a subset of $\Pcal_h\bH^1$: using a nonconforming finite element method, the integration by parts must be done on each element of the triangulation, and we have:
\begin{equation}
\label{eq:discIPP}
\forall\vvec\in\Pcal_h\bH^1,\quad\int_\Om\fvec\cdot\vvec=(\div_h\vvec,\phi)+\sum_{f\in\Ical_F}\int_{F_f}[\vvec\cdot\nvec_f]\,\phi.
\end{equation}
When we apply this result to the right-hand-side of \eqref{eq:Stokes-FVah}, we notice that the term with the jumps acts as a numerical source, which numerical influence is proportional to $1/\nu$. Thus, we cannot obtain exactly $\uvec_h=0$ (see also \eqref{eq:aprioriEE}). Linke proposed in \cite{Link14} to project the test function $\vvec_h\in\bX_h$ on a discrete subspace of $\bH(\div;\,\Om)$, like Raviart-Thomas or Brezzi-Douglas-Marini finite elements (see \cite{RaTh77,BrDM85}, or the monograph \cite{BoBF13}). Let $\Pi_{\div}:\bX_{0,h}\rightarrow P^k_{disc}(\Tcal_h)\cap\bH_0(\div;\Om)$ be some interpolation operator built so that for all $\vvec_h\in\bX_{0,h}$, for all $\ell\in\Ical_K$, $(\div\Pi_{\div}\vvec_h)_{|K_\ell}=\div\vvec_{h|K_\ell}$. Integrating by parts, we have for all $\vvec_h\in\bX_{0,h}$:
\[
\begin{array}{rcl}
\ds\int_\Om\fvec\cdot\Pi_{\div}\vvec_h&=&\ds\int_\Om\phi\,\div\Pi_{\div}\vvec_h=\sum_{\ell\in K_\ell}\int_{K_\ell}\phi\,\div\Pi_{\div}\vvec_h,\\
&=&\ds\sum_{\ell\in K_\ell}\int_{K_\ell}\phi\,\div\vvec_h=(\div_h\vvec_h,\phi).
\end{array}
\]
The projection $\Pi_{\div}$ allows to eliminate the terms of the integrals of the jumps in \eqref{eq:discIPP}. \\
Let us write Problem \eqref{eq:Stokes-FVah} as:\\
Find $(\uvec_h,p_h)\in\Xcal_h$ such that
\begin{equation}
\label{eq:Stokes-FVah-RT}
a_{S,h}\,\left((\uvec_h,p_h),(\vvec_h,q_h)\right)=\ell_\fvec\,\left(\,(\Pi_{\div}\vvec_h,q_h)\,\right)\quad\forall(\vvec_h,q_h)\in\Xcal_h.
\end{equation}
In the case of $\Xcal_h=\Xcal_{CR}$ and a projection on Brezzi-Douglas-Marini finite elements, the following error estimate holds if $(\uvec,p)\in\bH^2(\Om)\times H^1(\Om)$: 
\begin{equation}
\label{eq:aprioriEE-PiBDM}
\|\uvec-\uvec_h\|_{\bL^2(\Om)}\leq \widetilde{C}\,h^2\,|\uvec|_{\bH^2(\Om)},
\end{equation}
where the constant $\widetilde{C}$ if independent of $h$. The proof is detailed in \cite{BLMS15} for shape-regular meshes and \cite{AKLM22} for anisotropic meshes. We remark that the error doesn't depend on the norm of the pressure nor on the $\nu$ parameter.  We will provide some numerical results to illustrate the effectiveness of this formulation, even with a projection on the Raviart-Thomas finite elements, which, for a fixed polynomial order, are less precise than the Brezzi-Douglas-Marini finite elements.\\
For all $\ell\in\Ical_K$, we let $P^k_H(K_\ell)$ be the set of homogeneous polynomials of order $k$ on $K_\ell$.
\\ For $k\in\N^\star$, the space of Raviart-Thomas finite elements can be defined as:
\[
\bX_{RT_k}:=\left\{\vvec\in\bH(\div;\,\Om);\,\forall \ell\in\Ical_k,\,\vvec_{|K_\ell}=\avec_\ell+b_\ell\xvec\,|\,(\avec_\ell,b_\ell)\in P^k(K_\ell)^d\times P^k_H(K_\ell)\right\}.
\]
Let $k\leq 1$. \\
The Raviart–Thomas interpolation operator $\Pi_{RT_k}:\bH^1(\Om)\cup\bX_h\rightarrow\bX_{RT_k}$ is defined by: $\forall\vvec\in\bH^1(\Om)\cup\bX_h$,
\begin{equation}
\label{eq:PiRTdef}
\left\{
\begin{array}{rl}
\forall f\in\Ical_F,&\ds\int_{F_f}\Pi_{RT_k}\vvec\cdot\nvec_f\,q=\int_{F_f}\vvec\cdot\nvec_f\,q,\quad\forall q\in P^k(F_f)\\
\mbox{for }k=1,\,\forall\ell\in\Ical_K,&\ds\int_{K_\ell}\Pi_{RT_1}\vvec=\int_{K_\ell}\vvec
\end{array}\right..
\end{equation}
Note that the Raviart–Thomas interpolation operator preserves the constants. Let $\vvec_h\in\bX_h$. In order to compute  the left-hand-side of \eqref{eq:discIPP}, we must evaluate $(\Pi_{RT_k}\vvec_h)_{|K_\ell}$ for all $\ell\in\Ical_K$. Calculations are performed using the proposition below, which corresponds to \cite[Lemma 3.11]{Gati14}:
\begin{prop}\label{prop:PiRTref}
Let $k\leq 1$. Let $\hat{\Pi}_{RT_k}:\bH^1(\hatK)\rightarrow\bP^k(\hatK)$ be the Raviart–Thomas interpolation operator restricted to the reference element, so that: $\forall\hat{\vvec}\in\bH^1(\hatK)$,
\begin{equation}
\label{eq:PiRTref}
\left\{\begin{array}{rl}
\forall \hatF\in\pa\hatK,&
\ds\int_{\hatF}\hat{\Pi}_{RT_k}\hat{\vvec}\cdot\nvec_{\hatF}\,\hat{q}=\int_{\hatF}\hat{\vvec}\cdot\nvec_{\hatF}\,\hat{q},\quad\forall \hat{q}\in P^k(\hatF)\\
\mbox{\emph{for} }k=1,&\ds\int_{\hatK}\hat{\Pi}_{RT_k}\hat{\vvec}=\int_{\hatK}\hat{\vvec}
\end{array}\right..
\end{equation}
We then have: $\forall\ell\in\Ical_K$,
\begin{equation}
\label{eq:PiRTrefPiRT}
(\Pi_{RT_k}\vvec)_{|K_\ell}(\xvec)=
\bbB_\ell\,\left(\,\hat{\Pi}_{RT_k}\bbB_\ell{}^{-1}\hat{\vvec}_\ell\,\right)\circ T_\ell{}^{-1}(\xvec)\,\mbox{ where }\,\hat{\vvec}_\ell=\vvec\circ T_\ell(\hat{\xvec}).
\end{equation} 
\end{prop}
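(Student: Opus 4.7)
The strategy is to check that the right-hand side of \eqref{eq:PiRTrefPiRT}, which I denote $\wvec(\xvec) := \bbB_\ell\,(\hat{\Pi}_{RT_k}\bbB_\ell^{-1}\hat{\vvec}_\ell)\circ T_\ell^{-1}(\xvec)$, coincides with $(\Pi_{RT_k}\vvec)_{|K_\ell}$. Since the defining moments \eqref{eq:PiRTdef} are unisolvent on $\bX_{RT_k}(K_\ell)$, the task splits into two parts: (a) verify that $\wvec\in\bX_{RT_k}(K_\ell)$, and (b) show that the face moments (plus, when $k=1$, the volume moment) of $\wvec$ agree with those of $\vvec$.

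For (a), write $\hat{\wvec} := \hat{\Pi}_{RT_k}(\bbB_\ell^{-1}\hat{\vvec}_\ell) = \hat{\avec} + \hat{b}\,\hat{\xvec}$ with $\hat{\avec}\in P^k(\hatK)^d$ and $\hat{b}\in P^k_H(\hatK)$, and substitute $\hat{\xvec} = \bbB_\ell^{-1}(\xvec-\bvec_\ell)$. For $k=0$, $\hat{b}$ is a constant $c$, so $\wvec(\xvec) = \bbB_\ell\hat{\avec} - c\,\bvec_\ell + c\,\xvec$, which lies in $\bX_{RT_0}(K_\ell)$. For $k=1$, decompose $\hat{b}\bigl(\bbB_\ell^{-1}(\xvec-\bvec_\ell)\bigr) = b_H(\xvec) + c$ with $b_H\in P^1_H(K_\ell)$ and $c\in\R$; the stray term $c\,\xvec$ is absorbed into the $P^1(K_\ell)^d$ summand, producing a valid element of $\bX_{RT_1}(K_\ell)$.

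For (b), the affine change of variables together with the cofactor identity $\nvec_f\,dS = (\det\bbB_\ell)\,\bbB_\ell^{-T}\hat{\nvec}\,d\hat{S}$ for surface elements yields, for every $q\in P^k(F_f)$ with $\hat{q} := q\circ T_\ell\in P^k(\hatF)$,
\[
\int_{F_f}\wvec\cdot\nvec_f\,q\,dS = (\det\bbB_\ell)\int_{\hatF}\hat{\Pi}_{RT_k}(\bbB_\ell^{-1}\hat{\vvec}_\ell)\cdot\hat{\nvec}\,\hat{q}\,d\hat{S}.
\]
Invoking the defining face moments of $\hat{\Pi}_{RT_k}$ in \eqref{eq:PiRTref} and reversing the change of variables identifies this with $\int_{F_f}\vvec\cdot\nvec_f\,q\,dS$. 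When $k=1$, the analogous volume change of variables $\int_{K_\ell}\wvec = |\det\bbB_\ell|\,\bbB_\ell\int_{\hatK}\hat{\Pi}_{RT_1}(\bbB_\ell^{-1}\hat{\vvec}_\ell)$, combined with the second line of \eqref{eq:PiRTref}, gives $\int_{K_\ell}\wvec = \int_{K_\ell}\vvec$. Unisolvence on $\bX_{RT_k}(K_\ell)$ then yields \eqref{eq:PiRTrefPiRT}.

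The only non-routine step is the bookkeeping in (a) for $k=1$: a naive reading would demand that $\hat{b}\circ T_\ell^{-1}$ land in $P^1_H(K_\ell)$, but because $T_\ell^{-1}$ is affine rather than linear, a constant term is generated. Recognizing that this stray constant can be reabsorbed into the $\avec_\ell$-summand of the $RT_1$ decomposition is what makes the argument close; the remainder is a direct consequence of the definition of $\hat{\Pi}_{RT_k}$ together with standard affine change-of-variables identities.
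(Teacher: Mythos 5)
Your proof is correct and follows essentially the same route as the paper, which states that the argument rests on the equality of the $\hatF$- and $\hatK$-moments of the two candidate functions together with unisolvence of the Raviart--Thomas degrees of freedom. You additionally spell out the verification that the transformed function actually belongs to the local $RT_k$ space on $K_\ell$ (including the reabsorption of the constant generated by the affine shift for $k=1$), a step the paper leaves implicit but which is needed for the unisolvence argument to close.
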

The proof is based on the equality of the $\hatF$ and $\hatK$-moments of $(\Pi_{RT_k}\vvec)_{|K_\ell}\circ T_\ell(\hat{\xvec})$ and $\bbB_\ell\left(\,\hat{\Pi}_{RT_k}\bbB_\ell{}^{-1}\hat{\vvec}_\ell\,\right)(\hat{\xvec})$. For $k=0$, setting for $d'\in\{1,\cdots,d\}$: ${\mbox{\boldmath{$\psi$}}}_{f,{d'}}:=\psi_f\,\evec_{d'}$, we obtain that:
\begin{equation}
\label{eq:PiRTCR}
\forall\ell\in\Ical_K\,,\,\forall f\in\Ical_{F,\ell}\,,\quad(\Pi_{RT_0}{\mbox{\boldmath{$\psi$}}}_{f,{d'}})_{|K_\ell}=(d\,|K_\ell|)^{-1}\,\left(\xvec-\vec{OS}_{f,\ell}\right)\Scal_{f,\ell}\cdot\evec_{d'}\,,
\end{equation}
where $S_{f,\ell}$ is the vertex opposite to $F_f$ in $K_\ell$.\\
For $k=1$, the vector $(\Pi_{RT_1}\vvec_h)_{|K_\ell}$ is described by eight unknowns:
\[
(\Pi_{RT_1}\vvec_h)_{|K_\ell}=\bbA_\ell\,\xvec+(\bvec_\ell\cdot\xvec)\,\xvec+\dvec_\ell,\mbox{ where }\bbA_\ell\in\R^{2\times2},\,\bvec_\ell\in\R^2,\,\dvec_\ell\in\R^2.
\]
We compute only once the inverse of the matrix of the linear system \eqref{eq:PiRTref}, in $\R^{8\times 8}$.
\\
In the Table \ref{tab:grad} (resp. Tables \ref{tab:cossin-3} and \ref{tab:cossin-4}), we call $\eps_0(\uvec)=\|\uvec_h\|_{\bL^2(\Om)}$ (resp. $\|\uvec-\uvec_h\|_{\bL^2(\Om)}/\|\uvec\|_{\bL^2(\Om)}$) the velocity error in $\bL^2(\Om)$-norm, where $\uvec_h$ is the solution to Problem \eqref{eq:Stokes-FVah} (columns $\bX_{CR}$ and $\bX_{FS}$) or \eqref{eq:Stokes-FVah-RT} (columns $\bX_{CR}+\Pi_{RT_0}$ and $\bX_{FS}+\Pi_{RT_1}$) and $h$ is the mesh step.
\\
We first consider Stokes Problem \eqref{eq:Stokes} in $\Omega=(0,1)^2$ with $\uvec=0$, $p=(x_1)^3+(x_2)^3-0.5$, $\fvec=\grad p=3\,\left(\,(x_1)^2,(x_2)^2\,\right)^T$. We report in Table \ref{tab:grad} $\eps_0(\uvec)$ for $h=5.00\,e-2$ and for different values of $\nu$.
\begin{table}[h]
\begin{tabular}{ccccc}
\hline
$\nu$&$\bX_{CR}$&$\bX_{CR}+\Pi_{RT_0}$&$\bX_{FS}$&$\bX_{FS}+\Pi_{RT_1}$\\
\hline
$1.00\,e+0$ & $3.19\,e-4$ & $1.34\,e-18$ & $3.53\,e-7$ & $9.09\,e-19$\\
$1.00\,e-3$ & $3.19\,e-1$ & $1.34\,e-15$ & $3.53\,e-4$ & $9.09\,e-16$\\
$1.00\,e-4$ & $3.19\,e+0$ & $1.34\,e-14$ & $3.53\,e-3$ & $9.09\,e-15$\\
\hline
\end{tabular}
\caption{\label{tab:grad}Values of $\eps_0(\uvec)$ for $h=5.00\,e-2$}
\end{table}
\\%
When there is no projection, the error is inversely proportional to the $\nu$ parameter, whereas using the projection, we obtain $\uvec_h=0$ up to machine precision. 
\\
We now consider Stokes Problem \eqref{eq:Stokes} in $\Omega=(0,1)^2$ with:
\[
\uvec=\left(\begin{matrix}(1-\cos(2\,\pi\,x_1))\,\sin(2\,\pi\,x_2)\\
(\cos(2\,\pi\,x_2)-1)\,\sin(2\,\pi\,x_1)\end{matrix}\right),\quad \left\{\begin{array}{rcl}
p&=&\sin(2\,\pi\,x_1)\,\sin(2\,\pi\,x_2),\\
\fvec&=&-\nu\,\Delta\uvec+\grad p.\end{array}\right.
\]
We report in Table \ref{tab:cossin-3} (resp. \ref{tab:cossin-4}) the values of $\eps_0(\uvec)$ in the case $\nu=1.00\,e-3$ (resp. $\nu=1.00\,e-4$) for different level of mesh refinement. When there is no projection, $\eps_0(\uvec)$ is inversely proportional to $\nu$, whereas using the projection, $\eps_0(\uvec)$ is independent of $\nu$.

\begin{table}[h]
\begin{tabular}{ccccc}
\hline
$h$&$\bX_{CR}$&$\bX_{CR}+\Pi_{RT_0}$&$\bX_{FS}$&$\bX_{FS}+\Pi_{RT_1}$\\
\hline
$5.00\,e-2$ & $5.66\,e-1$ & $1.13\,e-2$ & $2.35\,e-3$ & $2.06\,e-4$\\
$2.50\,e-2$ & $1.33\,e-1$ & $2.89\,e-3$ & $3.21\,e-4$ & $2.59\,e-5$\\
$1.25\,e-2$ & $3.88\,e-2$ & $5.40\,e-4$ & $4.20\,e-5$ & $3.40\,e-6$\\
$6.25\,e-3$ & $8.40\,e-3$ & $1.79\,e-4$ & $5.04\,e-6$ & $4.15\,e-7$\\
Rate        & $h^{2.05}$  & $h^{2.07}$  & $h^{2.96}$  & $h^{2.98}$\\
\hline
\end{tabular}
\caption{\label{tab:cossin-3}Values of $\eps_0(\uvec)$ for $\nu=1.00\,e-3$}
\end{table}%
\begin{table}[h]
\begin{tabular}{ccccc}
\hline
$h$&$\bX_{CR}$&$\bX_{CR}+\Pi_{RT_0}$&$\bX_{FS}$&$\bX_{FS}+\Pi_{RT_1}$\\
\hline
$5.00\,e-2$ & $5.66\,e-0$ & $1.13\,e-2$ & $2.35\,e-2$ & $2.06\,e-4$\\
$2.50\,e-2$ & $1.33\,e-0$ & $2.89\,e-3$ & $3.20\,e-3$ & $2.59\,e-5$\\
$1.25\,e-2$ & $3.38\,e-1$ & $5.40\,e-4$ & $4.20\,e-4$ & $3.40\,e-6$\\
$6.25\,e-3$ & $8.40\,e-2$ & $1.79\,e-4$ & $5.04\,e-5$ & $4.15\,e-7$\\
Rate        & $h^{2.05}$  & $h^{2.07}$  & $h^{2.96}$  & $h^{2.98}$\\
\hline
\end{tabular}
\caption{\label{tab:cossin-4}Values of $\eps_0(\uvec)$ for $\nu=1.00\,e-4$}
\end{table}
%
Let $\uvec_{FS}$ (resp. $\uvec_{FS+RT_1}$) the solution to Problem \eqref{eq:Stokes-FVah} (resp. \eqref{eq:Stokes-FVah-RT}) computed with Fortin-Soulie finite elements. We represent on Figure \ref{fig:dU} the values of the Lagrange projection of $(\uvec_{FS}-\uvec_{FS+RT_1})$ in the case where $h=2.50\,e-2$ and $\nu=1.00\,e-4$.  We observe local oscillations, of order the mesh step, which are caused by the numerical source exhibited in \eqref{eq:discIPP}.
 \begin{figure}[t]
 \includegraphics[width=1\linewidth]{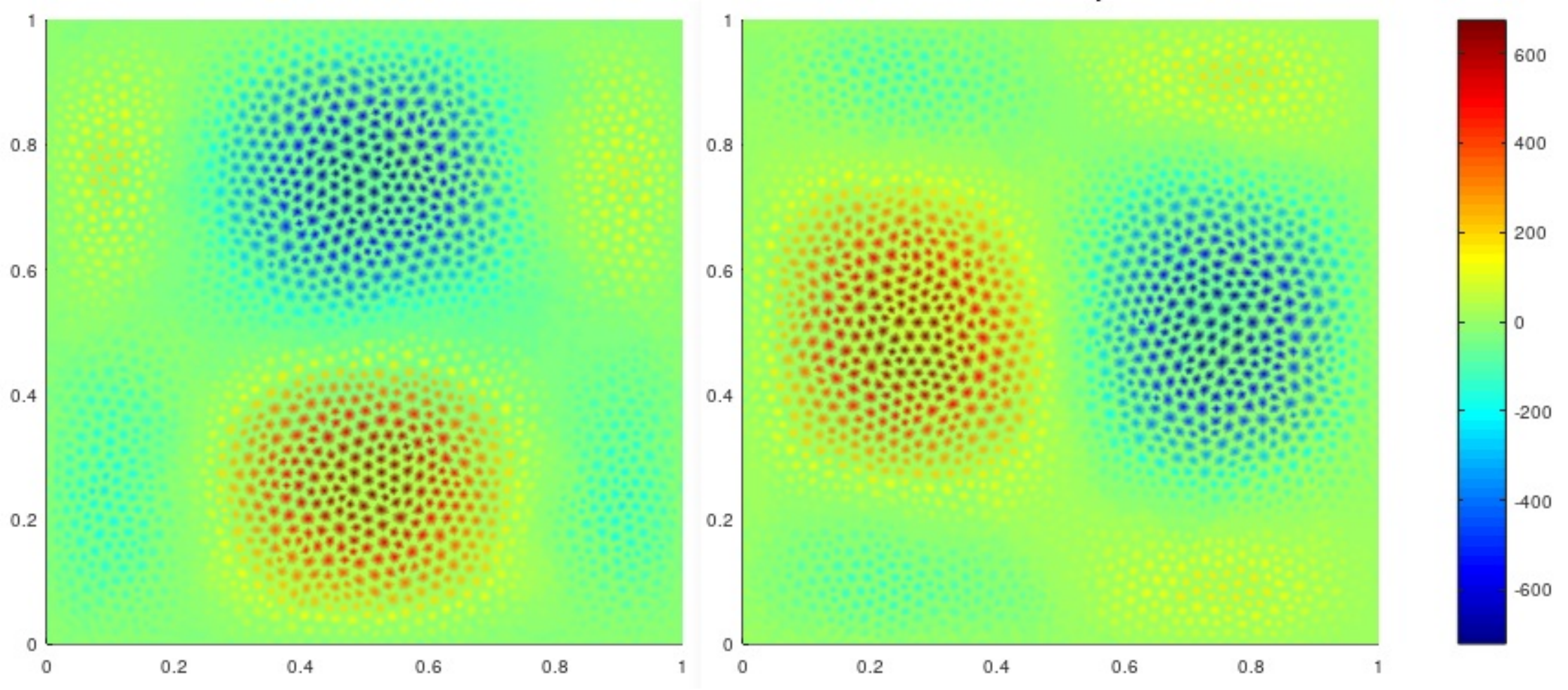}
 \caption{Values of $(\uvec_{FS}-\uvec_{FS+RT_1})$. Left: $x_1$-component, right: $x_2$-component.}
 \label{fig:dU}
 \end{figure}
\\ 
In order to enhance the numerical results, one can also use a posteriori error estimators to adapt the mesh (see \cite{DaDP95,DoAi05} for order $1$ and \cite{AABR12} for order $2$).
\\
Alternatively, using the nonconforming Crouzeix-Raviart mixed finite element method, one can build a divergence-free basis, as described in \cite{Hech81}. Notice that using conforming finite elements, the Scott-Vogelius finite elements \cite{ScVo85,Zhan05} produce velocity approximations that are exactly divergence free.
\\
The code used to get the numerical results can be downloaded on GitHub \cite{StokesNCFEM}. 
\section{Conclusion}
We analysed the discretization of Stokes problem with nonconforming finite elements in light of the T-coercivity theory, we computed stability coefficients for $k=1$, $d=2$ or $3$ without regularity assumption; and for $k=2$, $d=2$ in the case of a shape-regular simplicial triangulation sequence. For $k=2$, we used an alternative definition of the Fortin-Soulie interpolation operator. We then provided numerical results to illustrate the importance of using $\bH(\div)$-conforming projection. Further, we intend to extend the study to other mixed finite element methods.
\section*{Acknowledgements}
The author acknowledges Mahran Rihani and Albéric Lefort.
\bibliographystyle{unsrt}
\bibliography{Jame22-TC-Stokes}

\end{document}